\documentclass[11pt,a4paper]{article}

\usepackage{amsmath, amsthm, amsfonts, amssymb, mathrsfs}
\usepackage{bbm}
\usepackage{tikz}
\usetikzlibrary{matrix,arrows}
\usepackage{bm}
\usepackage{subfig}
\usepackage{listings}
\usepackage{pifont}

\addtolength{\oddsidemargin}{-10mm}
\addtolength{\textwidth}{20mm}
\addtolength{\topmargin}{-12mm}
\addtolength{\textheight}{24mm}

\newtheorem{thm}{Theorem}[section]

\newtheorem{lemma}[thm]{Lemma}
\newtheorem{proposition}[thm]{Proposition}

\theoremstyle{definition}
\newtheorem{definition}[thm]{Definition}
\newtheorem{remark}[thm]{Remark}

\bibliographystyle{plain}

\newcommand{\mx}{\mathrm{max}}
\newcommand{\mn}{\mathrm{min}}
\newcommand{\nx}{\mathrm{2max}}
\newcommand{\lx}{\mathrm{2min}}

\newcommand{\ol}[1]{\overline{#1}}

\newcommand{\CC}{\mathcal{C}}
\newcommand{\C}{\mathbbm{C}}
\newcommand{\R}{\mathbbm{R}}
\newcommand{\E}{\mathbbm{E}}

\providecommand{\mathbold}[1]{\bm{#1}}
\newcommand{\vct}[1]{\mathbold{#1}}
\newcommand{\IN}{\mathbbm{N}}
\newcommand{\IR}{\mathbbm{R}}
\newcommand{\polar}{\circ}
\newcommand{\veps}{\varepsilon}
\newcommand{\sdim}{\delta}
\newcommand{\sdimw}{w}
\DeclareMathOperator{\Prob}{Prob}
\newcommand{\Expect}{\operatorname{\mathbb{E}}}
\DeclareMathOperator{\vol}{vol}

\DeclareMathOperator{\dist}{dist}
\DeclareMathOperator{\Uniform}{U}
\DeclareMathOperator{\codim}{codim}
\newcommand{\resTMP}[2]{#1\to#2}

\newcommand{\nres}[3]{\|\vct{#1}\|_{\resTMP{#2}{#3}}}
\newcommand{\sres}[3]{\sigma_{\resTMP{#2}{#3}}(\vct{#1})}
\newcommand{\srestm}[3]{\sigma_{\resTMP{#2}{#3}}(-\vct{#1}^T)}

\newcommand{\Ren}{\mathcal{R}}
\newcommand{\RCD}[3]{\Ren_{#2,#3}(\vct{#1})}

\newcommand{\mtx}[1]{\mathbold{#1}}
\newcommand{\norm}[1]{\Vert {#1}\Vert}
\newcommand{\Proj}{\ensuremath{\mtx{\Pi}}}
\newcommand{\lspan}{\mathrm{span}}
\newcommand{\D}{\mathcal{D}}
\renewcommand{\P}{\mathcal{P}}

\newcommand{\CP}{\mathbb{CP}}

\usepackage{tabu}
\setlength{\tabulinesep}{1mm}

\usepackage[pdftex, pdfpagelabels]{hyperref}
\hypersetup{
  plainpages=false,
  colorlinks,
  citecolor=black,
  filecolor=black,
  linkcolor=black,
  urlcolor=blue,
  pdfpagemode = UseNone,
  pdfstartview={XYZ null null 0.75},
}

\begin{document}

\title{Average-case complexity without the black swans}

\author{Dennis Amelunxen \and Martin Lotz}

\maketitle

\begin{abstract}
We introduce the concept of weak average-case analysis as an attempt to achieve theoretical complexity results that are closer to practical experience than those resulting from traditional approaches. This concept is accepted in other areas such as non-asymptotic random matrix theory and compressive sensing, and has a particularly convincing interpretation in the most common situation encountered for condition numbers, where it amounts to replacing a null set of ill-posed inputs by a ``numerical null set''.
We illustrate the usefulness of these notions by considering three settings: (1)~condition numbers that are inversely proportional to a distance of a homogeneous algebraic set of ill-posed inputs; (2)~the running time of power iteration for computing a leading eigenvector of a Hermitian matrix; (3)~Renegar's condition number for conic optimisation.
\end{abstract}

\section{Introduction}
Depending on context and tradition,
a computational problem can mean something practical that begs to be solved as efficiently as possible, or a mathematical object in its own right, to be analysed, classified, and understood. In the first sense, the aim is to develop methods that work well on problems of interest, while in the second, complexity-theoretic sense, algorithms are merely devices used to show that a problem can be solved within certain resource constraints, e.g., in a certain complexity class or with a running time bounded by some function of the input size. Needless to say, complexity-theoretic results are often only weakly correlated with practical experience; a typical example is the simplex method. This is particularly true for numerical problems, where often a condition number serves as a proxy to computational complexity. In this note we aim at shortening the gap between complexity results and practical experience in a theoretically sound way.

In situations where worst-case analysis is meaningless or overly pessimistic, an established practise in complexity theory is to endow the space of inputs with a probability measure and then analyze random variables of interest, like the condition number, induced by this measure. A major point of discussion is the explanatory power of the random model, which needs to satisfy some assumptions to be within reach of a theoretical analysis, in comparison with ``typical problems'' that are encountered in practice.
In this article we do not address the issue of the accuracy of the chosen probability model for the input, but rather a different point of discussion which has received little attention so far. Because even in the most extreme (and unlikely) case that the chosen probability model is 100\% accurate, one may \emph{experience} a behavior that is not predicted by the traditional method of analysis. This discrepancy is due to ``black swans''; inputs that dominate the theoretical analysis, but which are at the same time extremely rare, so that they practically never show up. For a very concrete example, the expected condition number of a random quadratic Gaussian matrix is infinite, yet most matrices are well-conditioned. On a side note, this discrepancy might also be the reason for a claim by
Goldstine and von Neumann~\cite[p.14]{von1963john}
(also pointed out by Edelman and Rao~\cite{edelman2005random}) that ``for a random matrix of order $n$ the expectation value of $l$ [the condition number] has been shown to be about $n$.'' 

In our approach we allow to discard a small subset of the input space. This more liberal attitude towards accounting for all inputs reflects modern practice better, as seen in convex relaxation methods such as compressive sensing, where it is entirely acceptable that an algorithm may even fail on exponentially small sets. From a numerical point of view, we are not able to distinguish between null sets and very small sets due to round-off errors, so the latter may be considered as ``numerical null sets''. As we will see, disregarding such a practically invisible sets in the analysis can lead to dramatically improved bounds.

\begin{definition}
For $k\in\IN$ let $(M_k,\mu_k)$ be a probability space and let $T_k\colon M_k\to\IR$ be a $\mu_k$-measurable function. We say that the family $\{T_k\}$ has a \emph{weak expectation} of $O(f(k))$ if there exists a family of sets of \emph{exceptional inputs}, $E_k\subseteq M_k$, such that $\mu_k(E_k)=e^{-\Omega(k)}$ and the conditional expectation, conditioned on the nonexceptional inputs, $\E[T_k(\vct x)\mid \vct x\not\in E_k]$ is bounded by $f(k)$.
\end{definition}

Accordingly, we will speak of \emph{weak average-case complexity} or \emph{weak smoothed complexity} of algorithms or condition numbers. We use the term `weak' because every traditional complexity analysis without exceptional inputs is in particular a corresponding weak analysis; when it comes to the informative value of the analysis one may in fact argue that the weak analysis is ``stronger'' than the traditional one. Of course, as with every other use of the $O$-calculus, one has to be careful about the involved constants, but it seems natural, if not unavoidable, to make the above definition independent of these constants. We illustrate this analysis on three types of problems:
\begin{enumerate}
 \item Condition numbers inversely proportional to a distance to a homogeneous algebraic set of ill-posed inputs;
 \item The running time of power iteration for computing a leading eigenvector of a Hermitian matrix;
 \item Renegar's condition number for conic optimisation.
\end{enumerate}

In all three cases the considered expectations over the whole input space are infinite; taking out exponentially small subsets instead yield polynomial conditional expectations, and in the third case even a constant conditional expectation.

\subsection{Conic condition numbers}
Many condition numbers in numerical analysis 
come in form of an inverse normalized distance to a set of ill-posed inputs,
  \[ \CC(\vct x)\approx \frac{\|\vct x\|}{\dist(\vct x,\Sigma)} , \]
where $\Sigma$ is invariant under nonnegative scaling. The prototypical example is the matrix condition number, where $\Sigma$ is the set of singular matrices, but also condition numbers for eigenvalue computation and for systems of polynomial equations fall into this framework; see~\cite{BCL:08} for some examples and \cite{Condition} for a general discussion.
Assuming such a geometric characterisation, then in the case where $\Sigma$ is well-approximated by a set of real codimension $1$, the correct asymptotic order of the tail bounds for large values of~$t$ is
  \[ \Prob\{\CC(\vct x)\geq t\} \approx t^{-1}\,\vol\Sigma, \]
where $\vol\Sigma$ is some natural measure of a projective version of $\Sigma$. A direct consequence is
 \[ \E[\CC(\vct x)] = \infty . \]
This certainly does not reflect the experience that the condition for random inputs is usually seen as unproblematic. 
An explanation for this codimension conundrum may be found in the simple illustration in Figure~\ref{fig:influence-Sigma}: although~$\Sigma$ does have codimension one in both pictures, it influences in the second case only a small part of the input space in this way; the predominant part of inputs perceives $\Sigma$ as ``smaller''.

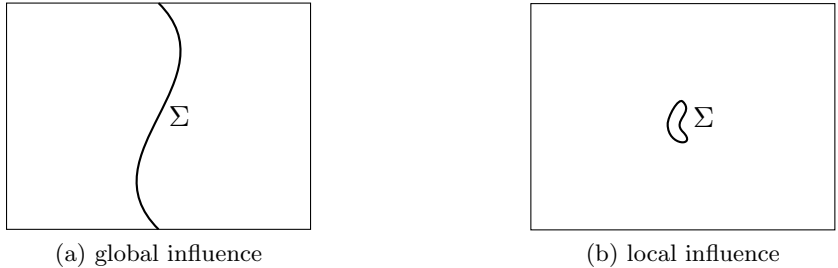
\begin{figure}[!ht]
  \begin{center}
  \subfloat[global influence]{
    \begin{tikzpicture}
    \draw (0,0) rectangle (4,3);
    \draw[thick] (2,0) .. controls (1,1) and (3,2) .. (2,3);
    \path (2,1.5) node[right]{$\Sigma$};
    \end{tikzpicture}
  }
  \rule{25mm}{0mm}
  \subfloat[local influence]{
    \begin{tikzpicture}
    \draw (0,0) rectangle (4,3);
    \draw[thick] (2,1.3) .. controls ++(-0.15,0.15) and ++(0.15,-0.15) .. (2,1.7)
                 (2,1.7) .. controls ++(-0.06,0.06) and ++(-0,0.1) .. (1.8,1.4)
                 (1.8,1.4) .. controls ++(0,-0.3) and ++(0.2,-0.2) .. (2,1.3);
    \path (2,1.5) node[right]{$\Sigma$};
    \end{tikzpicture}
  }
  \end{center}
  \caption{Illustration of the possible influences of ill-posed inputs.}
  \label{fig:influence-Sigma}
\end{figure}

The result of such a situation is that the small part around the set of ill-posed inputs \emph{dominates the complexity analysis}. And this part of local points around $\Sigma$ may in fact be \emph{exponentially small}. 

\begin{remark}
If one is only interested in the log of the condition, then this is not a big issue, and one may derive again general bounds for the expected condition of a slightly perturbed input, see~\cite{BCL:08} and~\cite[Ch.~22]{Condition}. 
\end{remark}

We adopt the setup of~\cite{BCL:08}: let $\Sigma\subseteq\IR^{n+1}$ be invariant under positive scaling and let the condition number $\CC\colon S^n\to\IR$ be {\em defined} by $\CC(\vct x)=\|\vct x\|/\dist(\vct x,\Sigma)$, where $\dist(\vct x,\Sigma)=\inf_{\vct y\in \Sigma}\{\|\vct x-\vct y\|\}$. Such a condition number is called a \emph{conic condition number}. For the smoothed analysis setting we denote for a point $\vct z\in S^n$ and $\sigma>0$ the ball, i.e., the spherical cap, in $S^n$ of radius $\sigma$ around $\vct z$ by $B(\vct z,\sigma)$.

\begin{thm}\label{thm:con-cond-no.}
Let $\Sigma\cap S^n\subseteq W$, where $W\neq S^n$ is the zero set in $S^n$ of homogeneous polynomials of degree at most $d\geq1$ and let $\sigma\in(0,1]$. If $\codim\Sigma=1$ and $\vct z\in \Sigma$, then for $\vct x\in B(\vct z,\sigma)$ uniformly at random, $\E[ \CC(\vct x)]=\infty$. On the other hand, regardless of the codimension, for all $\vct z\in S^n$ there exists a set $E_{\vct z}\subseteq S^n$ such that for $\vct x\in B(\vct z,\sigma)$ uniformly at random,
\begin{align*}
   \Prob\{\vct x\in E_{\vct z}\} & < e^{-n} , & \E\big[ \CC(\vct x)\mid \vct x\not\in E_{\vct z}\big] & < \frac{13 d n(n+1)}{(1-e^{-n})\sigma} = O\Big(\frac{d n^2}{\sigma}\Big) .
\end{align*}
\end{thm}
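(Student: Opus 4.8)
The plan is to reduce both assertions to the behaviour of the \emph{anticoncentration function}
\[
  p(\epsilon)\ :=\ \Prob\{\dist(\vct x,\Sigma)\le\epsilon\}\ =\ \Prob\{\CC(\vct x)\ge 1/\epsilon\},\qquad \vct x\in B(\vct z,\sigma)\ \text{uniform},
\]
using that on $S^n$ one has $\CC(\vct x)=\|\vct x\|/\dist(\vct x,\Sigma)=1/\dist(\vct x,\Sigma)$. The first assertion comes from a lower bound $p(\epsilon)\ge c\,\epsilon$ for small $\epsilon$: since $\vct z\in\Sigma$ and $\codim\Sigma=1$, the cap $B(\vct z,\sigma)$ contains an $(n-1)$-dimensional $C^1$-piece of $\Sigma\cap S^n$ (for $n=1$, the point $\vct z$ itself), so the $\epsilon$-neighbourhood of $\Sigma$ inside $B(\vct z,\sigma)$ has $n$-volume bounded below by a fixed positive multiple of $\epsilon$; dividing by $\vol B(\vct z,\sigma)$ gives $p(\epsilon)\ge c\,\epsilon$, whence
\[
  \E[\CC(\vct x)]\ =\ \int_0^\infty\Prob\{\CC(\vct x)\ge t\}\,dt\ =\ \int_0^\infty p(1/t)\,dt\ \ge\ \int_{t_0}^\infty \frac{c}{t}\,dt\ =\ \infty .
\]
Everything substantial in the theorem is then contained in the \emph{upper} tail estimate $p(\epsilon)\le 13\,d\,n\,\epsilon/\sigma$ for all $\epsilon>0$, which I discuss last; it holds with a constant strictly below $13$, and the displayed final constant absorbs the resulting small slack.

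Granting this estimate, the second assertion follows readily. Put $\epsilon_0:=\sigma e^{-n}/(13\,d\,n)$ and $E_{\vct z}:=\{\vct x\in B(\vct z,\sigma):\dist(\vct x,\Sigma)<\epsilon_0\}$, so that $\Prob\{\vct x\in E_{\vct z}\}=p(\epsilon_0)<e^{-n}$. On the complement $\{\vct x\notin E_{\vct z}\}=\{\CC(\vct x)\le 1/\epsilon_0\}$, writing $A:=13\,d\,n/\sigma$ so that $1/\epsilon_0=Ae^{n}$, the layer-cake formula together with the tail estimate gives
\[
  \E\big[\CC(\vct x)\,\mathbbm 1_{\vct x\notin E_{\vct z}}\big]
  \ =\ \int_{0}^{1/\epsilon_0}\Prob\{\,t\le\CC(\vct x)\le 1/\epsilon_0\,\}\,dt
  \ \le\ \int_{0}^{1/\epsilon_0}\min\!\Big(1,\tfrac{A}{t}\Big)\,dt
  \ =\ A\,+\,A\ln(e^{n})\ =\ A(n+1),
\]
and dividing by $\Prob\{\vct x\notin E_{\vct z}\}\ge 1-e^{-n}$ gives $\E[\CC(\vct x)\mid\vct x\notin E_{\vct z}]\le A(n+1)/(1-e^{-n})=13\,d\,n(n+1)/\big((1-e^{-n})\sigma\big)$. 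Note that the two factors in $n(n+1)$ enter separately: one from the $n$ in $A$, the other from $\ln e^{n}$, i.e.\ from the width $\sim e^{n}$ of the integration range.

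It remains to prove the tail estimate $p(\epsilon)\le 13\,d\,n\,\epsilon/\sigma$, and this is the heart of the matter; I would prove it by integral geometry (cf.~\cite{BCL:08}). First, a cone reduction: since $\Sigma$ is invariant under positive scaling and $\Sigma\cap S^n\subseteq W$, a point $\vct x\in S^n$ with $\dist(\vct x,\Sigma)\le\epsilon<1$ lies within spherical distance $\arcsin\epsilon\le\tfrac{\pi}{2}\epsilon$ of $\Sigma\cap S^n$, hence of $W$; so $p(\epsilon)\le\Prob\{\dist(\vct x,W)\le\tfrac{\pi}{2}\epsilon\}$, while for $\epsilon\ge1$ the claimed bound is trivial since $d,n\ge1$. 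It therefore suffices to bound, for $\vct x$ uniform in $B(\vct z,\sigma)$, the relative $n$-volume of the $\delta$-tube $\{\dist(\cdot,W)\le\delta\}$ inside $B(\vct z,\sigma)$ by $O(d\,n\,\delta/\sigma)$ with a fully explicit constant. For this I would combine: (i)~Weyl's tube formula, bounding the $n$-volume of the tube by $\delta$ times the $(n-1)$-volume of $W$ in the slightly enlarged cap $B(\vct z,\sigma+\delta)$, the curvature corrections being controlled, as for any real algebraic hypersurface of degree $\le d$, by the integral-geometric bounds on its Lipschitz--Killing curvatures; (ii)~the Cauchy--Crofton formula and the fact that a great circle not contained in $W$ meets $W$ in at most $2d$ points (the great circles inside $W$ forming a null set since $W\ne S^n$), bounding that $(n-1)$-volume by $2d$ times the measure of great circles meeting $B(\vct z,\sigma+\delta)$ --- a fraction of order $\sigma\sqrt n$ of all great circles; and (iii)~the elementary estimates for $\vol B(\vct z,\sigma)$ and for the Wallis-type ratio $\vol(S^{n-1})/\vol(S^n)$. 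Assembling (i)--(iii) yields the factor $d\,n/\sigma$, where the second $n$ (upgrading $\sqrt n$ to $n$) comes from the trivial regime $\delta\gtrsim\sigma/n$ in which one merely uses $p\le1$. The main obstacle is precisely this assembly: carrying out the integral-geometric bookkeeping with every constant made effective, so that the whole product --- including the $\pi/2$ from the cone reduction --- comes out below $13$ and the dimension dependence remains linear in $n$.
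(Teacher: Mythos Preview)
Your proposal is correct and follows essentially the same approach as the paper: define $E_{\vct z}=\{\CC(\vct x)>ae^n\}$ with $a=13dn/\sigma$, invoke the tail bound $\Prob\{\CC(\vct x)>t\}<a/t$ from~\cite{BCL:08}, and compute the truncated expectation via layer-cake (the paper packages this last step as a standalone lemma, which you have simply inlined). The paper does not prove the infinite-expectation claim and treats the tail bound as a black box from~\cite{BCL:08}, whereas you sketch both; your integral-geometric outline for the tail bound is indeed the method used there.
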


We note that the bounds in Theorem~\ref{thm:con-cond-no.} are still rather coarse when applied in different settings, the reason being that they are quite general. For example, for a version $\kappa_F(\mtx{A})$ of the matrix condition number (using the Frobenius norm) we get from the above,
\begin{equation*}
 \Expect[\kappa_F(\mtx{A}) | \mtx{A}\not\in E] = O\big(n^{4}/\sigma\big).
\end{equation*}
Once the derivation is understood, it is straight-forward to apply the weak average-case analysis to more precise, problem specific, bounds, such as the smoothed analysis of the matrix condition number by Wschebor~\cite{Wsch:04}.

\subsection{Power iteration}\label{ssec:powerit}
Power iteration is a classic method for computing a dominant eigenvector and eigenvalue of a matrix.
Let $\mtx{A}\in \C^{n\times n}$ be a Hermitian matrix (this also works for other matrices, but for simplicity we restrict to this case) with eigenvalues $\lambda_i$, ordered according to their absolute values, which are the singular values of the matrix,
\begin{equation*}
 |\lambda_1|\geq |\lambda_2|\geq  \cdots \geq |\lambda_n|.
\end{equation*}
Let $\vct{u}_1,\dots,\vct{u}_n$ be eigenvectors corresponding to this ordering of eigenvalues. The eigenvalue $\lambda_1$ is called a dominant eigenvalue, while $\vct{u}_1$ is called a dominant eigenvector. The power iteration generates a sequence of unit vectors $\vct{p}_k$, $k\geq 0$, by setting
\begin{equation*}
 \vct{p}_k = \frac{\mtx{A}\vct{p}_{k-1}}{\norm{\mtx{A}\vct{p}_{k-1}}} = \frac{\mtx{A}^k\vct{p}_0}{\norm{\mtx{A}^k\vct{p}_0}}.
\end{equation*}
For a generic starting point, the algorithm converges geometrically with ratio $|\lambda_1|/|\lambda_2|$, see~\cite[9.2]{wilkinson1965algebraic} or~\cite[7.3]{GoLoan} for an analysis. As there are degenerate cases in which the algorithm does not converge at all, or very slowly, it is natural to ask about the average-case complexity of power iteration, where the average is taken over both the starting points and the space of input matrices. Such an analysis was carried out by Kostlan~\cite{K:88,K:91}. In~\cite{K:88}, he analyzes the power method for matrices from the Gaussian orthogonal and unitary ensembles: the expected number of steps is infinite (Thm.~4.3); but in Thm.~4.4 he also proves a result akin to a weak average-case analysis, which he calls `generalized average' (a notion he attributes to Smale), where he takes out a subset of the input space of measure~$\eta$. Unfortunately, setting this measure exponentially small, $\eta=e^{-\Omega(n)}$, as required for a weak average-case analysis, only yields an exponential bound on the number of iterations. In~\cite{K:91}, Kostlan discusses an iterated squaring method that understandably reduces the complexity of the method, but of course this is impractical for applications. The purpose of our work is to improve Kostlan's analysis by showing that power iteration has polynomial weak average running time.

To quantify the convergence, we use the Fubini-Study metric (a.k.a. angle)
\begin{equation*}
 d_R(\vct{x},\vct{y}) = \arccos \left(\frac{|\langle \vct{x},\vct{y}\rangle |}{\|\vct{x}\|\|\vct{y}\|}\right),
\end{equation*}
where $\|\cdot\|$ is the norm induced by the Hermitian inner product on $\C^n$.
For $0<\alpha\leq\pi/2$ and $\vct{x}\in \C^{n}$, let $\rho_{\alpha}(\vct{A},\vct{x})$ be the minimum number of iterations of the power method with $\vct{p}_0=\vct{x}$ that bring $\vct{p}_k$ into an $\alpha$-neighbourhood of $\vct{u}_1$, 
\begin{equation*}
 \rho_{\alpha}(\vct{A},\vct{x}) = \min\{k \mid d_R(\vct{p}_k,\vct{u}_1)\leq \alpha\}.
\end{equation*}
Define the expected value
\begin{equation*}
 \rho_{\alpha}(\mtx{A}) = \underset{\vct{x}\sim \Uniform(\CP^{n-1})}\Expect [\rho_{\alpha}(\mtx{A},\vct{x})],
\end{equation*}
where $\Uniform(\CP^{n-1})$ denotes the uniform distribution on complex projective space (the notion $\rho_{\alpha}(\mtx{A},\vct{x})$ is well-defined on $\CP^{n-1}$). In what follows, we assume as random model the Gaussian Unitary Ensemble (GUE). A GUE matrix is defined by $\mtx{H} = \frac{1}{2}(\mtx{G}+\mtx{G}^*)$, where $\mtx{G}$ is a complex Gaussian matrix, whose entries have standard Gaussian real and imaginary parts. In particular, the off-diagonal entries have real and imaginary part with mean $0$ and variance $1/2$, while the diagonal entries are real with mean $0$ and variance $1$.

\begin{thm}\label{thm:powerit-main}
Let $\alpha\in (0,\pi/4)$, $n\geq 1$, and $\mtx{H}\in \C^{n\times n}$ a GUE matrix. Then there exists a set $E_n\subset \C^{n\times n}$ of exceptional inputs, such that
\begin{equation*}
  \Prob\{\mtx{H}\in E_n\}\leq e^{-n},
\end{equation*}
and
\begin{equation*}
  \Expect[\rho_\alpha(\mtx{H}) \mid \mtx{H}\not\in E_n] < \frac{p(n)}{1-e^{-cn}}\, \log \cot\alpha , \quad p(n) = O(n^2)
\end{equation*}
for some constant $c>0$.
In particular, power iteration runs in weak polynomial time.
\end{thm}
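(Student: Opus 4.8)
The plan is to split the argument into a deterministic convergence estimate, an averaging step over the random starting vector, and a probabilistic estimate over the GUE matrix in which the exceptional set is carved out. For the deterministic part, fix $\mtx{H}=\mtx{A}$ and write the (unit) starting vector in the eigenbasis, $\vct{x}=\sum_i c_i\vct{u}_i$ with $\sum_i|c_i|^2=1$; then $\mtx{A}^k\vct{x}=\sum_i\lambda_i^k c_i\vct{u}_i$, and splitting off the $\vct{u}_1$-component gives
\[
 \tan d_R(\vct{p}_k,\vct{u}_1)=\frac{\big(\sum_{i\ge 2}|\lambda_i|^{2k}|c_i|^2\big)^{1/2}}{|\lambda_1|^k|c_1|}\ \le\ \Big(\frac{|\lambda_2|}{|\lambda_1|}\Big)^{\!k}\tan d_R(\vct{x},\vct{u}_1).
\]
Hence $d_R(\vct{p}_k,\vct{u}_1)\le\alpha$ as soon as $k\log(|\lambda_1|/|\lambda_2|)\ge\log\tan d_R(\vct{x},\vct{u}_1)+\log\cot\alpha$, so that
\[
 \rho_\alpha(\mtx{A},\vct{x})\ \le\ 1+\frac{\big(\log\tan d_R(\vct{x},\vct{u}_1)\big)_+ +\log\cot\alpha}{\log(|\lambda_1|/|\lambda_2|)}.
\]

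Next I would average over $\vct{x}\sim\Uniform(\CP^{n-1})$. Since $\sin^2 d_R(\vct{x},\vct{u}_1)$ has a $\mathrm{Beta}(n-1,1)$ distribution for fixed $\vct{u}_1$, an elementary integral gives $\Expect_{\vct{x}}\big[(\log\tan d_R(\vct{x},\vct{u}_1))_+\big]=O(\log n)$. Combining this with $\log(1+u)\ge u/(1+u)$ (applied with $u=(|\lambda_1|-|\lambda_2|)/|\lambda_2|$, so $\log(|\lambda_1|/|\lambda_2|)\ge(|\lambda_1|-|\lambda_2|)/|\lambda_1|$) yields
\[
 \rho_\alpha(\mtx{A})\ \le\ 1+\frac{|\lambda_1|}{|\lambda_1|-|\lambda_2|}\,\big(O(\log n)+\log\cot\alpha\big).
\]
Everything now reduces to controlling the random variable $|\lambda_1|/(|\lambda_1|-|\lambda_2|)$ for a GUE matrix $\mtx{H}$, i.e. the separation of the two largest eigenvalues \emph{in modulus}; note that a naive tail bound for this quantity is exactly what makes $\Expect[\rho_\alpha(\mtx{H})]=\infty$.

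For the probabilistic step, set $E_n=\{\norm{\mtx{H}}>C\sqrt{n}\}\cup\{|\lambda_1|-|\lambda_2|<\veps_n\}$ (together with an exponentially rare spectral re-ordering event, see below), with $C$ an absolute constant and $\veps_n=e^{-\Theta(n)}$. Gaussian concentration of the operator norm bounds $\Prob\{\norm{\mtx{H}}>C\sqrt{n}\}$ by $\tfrac12 e^{-n}$, and the anti-concentration estimate $\Prob\{|\lambda_1|-|\lambda_2|<s\}\le\poly(n)\cdot s$ bounds $\Prob\{|\lambda_1|-|\lambda_2|<\veps_n\}$ by $\tfrac12 e^{-n}$ once $\veps_n$ is taken small enough, so $\Prob\{\mtx{H}\in E_n\}\le e^{-n}$. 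On $E_n^{\,c}$ we have $|\lambda_1|\le C\sqrt{n}$, and — this is the point that improves on Kostlan's `generalized average', which retains a set of the prescribed measure and then uses its \emph{worst} point — we estimate the conditional \emph{expectation} by integrating the tail:
\[
 \Expect\Big[\tfrac{1}{|\lambda_1|-|\lambda_2|}\ \Big|\ \mtx{H}\notin E_n\Big]\ \le\ \frac{1}{1-e^{-n}}\int_0^{1/\veps_n}\min\!\Big(1,\tfrac{\poly(n)}{t}\Big)\,dt\ \le\ \frac{\poly(n)\big(1+\log(1/\veps_n)\big)}{1-e^{-n}}\ =\ \frac{O(\poly(n)\cdot n)}{1-e^{-n}},
\]
since $\log(1/\veps_n)=\Theta(n)$. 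Multiplying by $C\sqrt{n}$ and by $O(\log n)+\log\cot\alpha$ produces a bound $\Expect[\rho_\alpha(\mtx{H})\mid\mtx{H}\notin E_n]=O\!\big(\tfrac{n^2}{1-e^{-cn}}(\log n+\log\cot\alpha)\big)$, which is of the form claimed in the theorem (with $p(n)=O(n^2)$, the $O(\log n)$ term being dominated by $\log\cot\alpha$ in the accuracy range of interest, or otherwise absorbed into $p(n)$ when the anti-concentration exponent is small).

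\textbf{Main obstacle.} The crux is the anti-concentration bound $\Prob\{|\lambda_1|-|\lambda_2|<s\}\le\poly(n)\cdot s$, needed uniformly in $s$ down to exponentially small values and with a \emph{small} polynomial (ideally $\poly(n)=O(\sqrt{n})$, to keep $p(n)=O(n^2)$). The delicate régime is the near-symmetric spectrum: writing $\mu_1\le\cdots\le\mu_n$ for the real eigenvalues and assuming $\mu_n\ge-\mu_1$, one has $|\lambda_1|-|\lambda_2|=\min(\mu_n+\mu_1,\ \mu_n-\mu_{n-1})$ up to an exponentially rare re-ordering (which we fold into $E_n$), and $\mu_n+\mu_1$ can be arbitrarily small because $\mu_n$ and $-\mu_1$ are comparable. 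I would bound $\Prob\{\mu_n+\mu_1<s\}$ by conditioning on $\mu_1$ and using that the conditional density of the largest eigenvalue of a GUE matrix is $\poly(n)$-bounded — dominating it by the one-point correlation function of the underlying Hermite determinantal ensemble, which is $O(\sqrt{n})$ — and $\Prob\{\mu_n-\mu_{n-1}<s\}$ by level repulsion ($\beta=2$, so the edge gap density vanishes at $0$). Sharper spectral input, such as edge universality and the $n^{-1/6}$ Tracy–Widom fluctuation scale, would improve the polynomial but is not required for the stated $O(n^2)$.
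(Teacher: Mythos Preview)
Your strategy is essentially the paper's: Kostlan's deterministic bound, averaging over the starting vector, reducing to a tail estimate for $|\lambda_1|/(|\lambda_1|-|\lambda_2|)$ (the paper works with the equivalent $1/\log(|\lambda_1|/|\lambda_2|)$), splitting into the adjacent case $\mu_n-\mu_{n-1}$ (handled by the Wegner/level-repulsion estimate) and the extreme case $\mu_n+\mu_1$, and then truncating via Lemma~\ref{le:probexp}. The accounting that leads to $p(n)=O(n^2)$ matches.

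The one place where your argument needs repair is the extreme-eigenvalue case. You propose to bound $\Prob\{\mu_n+\mu_1<s\}$ by conditioning on $\mu_1$ and dominating the \emph{conditional} density of $\mu_n$ by the one-point correlation function. That domination is false in general: $\rho_1$ bounds the \emph{marginal} density of $\mu_n$, but conditioning on $\mu_1$ (say, forcing $\mu_1$ unusually far to the right) can make the conditional density of $\mu_n$ much larger than $O(\sqrt{n})$. The paper sidesteps this by exploiting the GUE symmetry $\lambda_{\mx}\stackrel{d}{=}-\lambda_{\mn}$: the sum $\lambda_{\mx}+\lambda_{\mn}$ is symmetric about $0$, so $2\Prob\{\lambda_{\mx}+\lambda_{\mn}<s\}-1=\Prob\{|\lambda_{\mx}+\lambda_{\mn}|<s\}$, and after the Tracy--Widom rescaling $\tilde\lambda_{\mx}+\tilde\lambda_{\mn}=n^{1/6}(\lambda_{\mx}+\lambda_{\mn})$ one only needs that the density of this \emph{unconditional} sum is bounded by some constant $M$, which follows from the joint eigenvalue density. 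This gives $\Prob\{|\lambda_{\mx}+\lambda_{\mn}|<s\}\le 2Mn^{1/6}s$, in fact sharper than the $O(\sqrt{n}\cdot s)$ you aim for. Replace your conditioning step by this symmetry argument (or, if you prefer a correlation-function route, use the two-point function $\rho_2(x,y)\le\rho_1(x)\rho_1(y)$ to bound $\Prob\{\exists i\neq j:\lambda_i+\lambda_j\in[0,s)\}$, which yields a cruder but valid $\poly(n)\cdot s$).
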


The result rests on the analysis of the quotient of the largest two singular values of a GUE matrix.
It would be interesting to see to what extent this result generalizes to other random models.
 The results are likely to carry over to Wigner matrices with sub-Gaussian entries. In this setting, the task would amount to finding good small ball probability bounds on the ratio or sum of extreme eigenvalues. A more challenging, though also more practically relevant, problem would be to study the ratio of singular values for a stochastic model of sparse and structured matrices, such as those that might arise in the discretisation of partial differential equations with random coefficients.

\subsection{Renegar's condition number and biconic feasibility}
The primal and dual (homogeneous) feasibility problems with reference cone~$C\subseteq\IR^m$ are the decision problems
\\
\def\tmpX{2mm}
\begin{minipage}{0.45\textwidth}
\begin{align}
   \exists \vct x & \in C\setminus\{\vct0\} \quad\text{s.t.} \hspace{\tmpX} \vct{Ax}=\vct0 ,
\tag{P}
\end{align}
\end{minipage}
\rule{0.065\textwidth}{0mm}
\begin{minipage}{0.47\textwidth}
\begin{align}
   \exists \vct y & \in\IR^n\setminus\{\vct0\} \quad\text{s.t.} \hspace{\tmpX} -\vct A^T\vct y\in C^\polar ,
\tag{D}
\end{align}
\end{minipage}
\\[2mm]
where $\vct A\in\IR^{n\times m}$ and $C^\polar = \{\vct z\in\IR^m\mid \langle \vct x,\vct z\rangle \leq 0 \text{ for all } \vct x\in C\}$ denotes the polar cone of~$C$ (the problem is to determine whether an input~$\vct A$ satisfies (P) or (D), which is almost surely, that is, for generic~$\vct A$, a strict alternative).
Special cases of interest in conic optimisation are when~$C$ is the non-negative orthant (LP), the second-order cone (QP), or the cone of positive semidefinite matrices (SDP). Other cases of interest, that include compressive sensing, are when~$C$ is the descent cone of a convex regularizer, in which case (the negation of)~(P) is referred to as a \emph{nullspace condition}.

It is obvious that the complexity of numerically solving the conic feasiblity problem does not only depend on the representation of $\mtx{A}$, but also on the intrinsic geometry of the problem. To quantify this, Renegar introduced a notion of condition, $\mathcal{R}_C(\mtx{A})$, that depends on both the geometry and the representation of $\mtx{A}$, and analysed in the context of linear programming the complexity of interior point methods for solving the feasibility problem~\cite{rene:94,rene:95a}. For the general conic feasibility problem, it was shown in~\cite{VRP:07} (see also~\cite[9.4]{Condition}) that the number of iterations of a barrier method for solving the feasibility problem is bounded by $O(\sqrt{\nu_C}\, \log(\nu_C \cdot \mathcal{R}_C(\mtx{A})))$, where $\nu_C$ is the barrier parameter of~$C$. Renegar's condition number $\mathcal{R}_C(\mtx{A})$ for descent cones of convex regularizers also appears uncredited in the error analysis of robust convex regularisation approaches to solving linear inverse problems~\cite{CRPW:12,FR:13}. 

The condition number $\mathcal{R}_C(\mtx{A})$ can become arbitrary large as $\mtx{A}$ approaches the boundary between (P) and (D). A probabilistic analysis of $\mathcal{R}_C(\mtx{A})$ and related quantities has been carried out in various settings; a cone-independent result from~\cite{AB:15} gives a bound of order $O(\log(n))$ for the expected logarithm of the related Grassmann condition number of a Gaussian matrix.

The theoretical bounds are somewhat out of sync with the observed performance. In fact, for large $m$ and $n$ and random $\mtx{A}$, it turns out that one of (P) or (D) will hold with overwhelming probability, while the other will hold with negligible probability, rendering the feasibility problem almost trivial; see~\cite{edge} and the references therein for a discussion of the associated phase transition phenomenon. 
One setting where this phenomenon has been embraced is compressive sensing~\cite{FR:13} and its generalisations. In this framework, the dual feasibility (D) is equivalent to a convex relaxation being successful.
From a more practical point of view, popular semidefinite programming solvers such as SDPT3 and Mosek rarely take more than 30 iterations on benchmark problems of various sizes~\cite{freund2007behavioral,dahlsemidefinite}; see also~\cite{roulet2015renegar} for some experiments in the context of compressive sensing.  

For symmetry reasons and to emphasise parallels to the matrix condition case, we consider in our analysis the following more general {\em biconic} convex feasibility problem with two nonzero closed convex cones $C\subseteq\IR^m$, $D\subseteq\IR^n$:
\\\def\tmpX{3mm}
\begin{minipage}{0.45\textwidth}
\begin{align}
   \exists \vct x & \in C\setminus\{\vct0\} \quad\text{s.t.} \hspace{\tmpX} \vct{Ax} \in D^\polar ,
\tag{P'}
\label{eq:(Pintro)}
\end{align}
\end{minipage}
\rule{0.065\textwidth}{0mm}
\begin{minipage}{0.47\textwidth}
\begin{align}
   \exists \vct y & \in D\setminus\{\vct0\} \quad\text{s.t.} \hspace{\tmpX} -\vct A^T\vct y\in C^\polar .
\tag{D'}
\label{eq:(Dintro)}
\end{align}
\end{minipage}
\\[2mm]
 (A generalisation of) Renegar's condition number is then defined by
\begin{equation}\label{eq:def-RCD}
  \RCD{A}{C}{D} = \min\bigg\{\frac{\|\vct A\|}{\sres{A}{C}{D}},\frac{\|\vct A\|}{\srestm{A}{D}{C}}\bigg\} ,
\end{equation}
where the smallest restricted singular value is defined as
  \[ \sres{A}{C}{D} = \min_{\vct x\in C\cap S^{m-1}} \|\Pi_D(\vct{Ax})\| ,\qquad \Pi_D(\vct y) = \min\{ \|\vct y-\vct y'\|\mid \vct y'\in D\} . \]
Note that in the case $C=\IR^m$, $D=\IR^n$ we recover the classical matrix condition number
  \[ \RCD{A}{\IR^m}{\IR^n}=\kappa(\vct A)=\|\vct A\|\,\|\vct A^\dagger\| , \]
  where $\mtx{A}^{\dagger}$ is the Moore-Penrose pseudoinverse of $\mtx{A}$.
In this unrestricted case a lot is known about the distribution of the condition number for Gaussian matrices but also for more general ensembles~\cite{CD:05,RV:09rect,BC:10,rudelson2010non}; see~\cite[Notes to Ch.~4]{Condition} for a concise discussion of this case. We only mention here that for Gaussian matrices it is known that if $m_k,n_k$ are such that $\lim_{k\to\infty} m_k/n_k = \gamma^2$, $0<\gamma<1$, then 
\begin{align*}
   \RCD{G}{\IR^m}{\IR^n} & =\kappa(\vct G)\to\frac{1+\gamma}{1-\gamma} \quad \text{almost surely} .
\end{align*}
In particular, in this asymptotic setting the expected condition number is of \emph{constant} order.

If $C$ and $D$ both have nonempty interior then the expectation of Renegar's condition number of Gaussian matrices $\RCD{G}{C}{D}$ is infinity. But if we consider families of cones such that their ``formats'' satisfy similar asymptotics as above, then we will show that Renegar's condition number does indeed have constant weak average-case complexity. We will describe next what we mean by similar format, after introducing a notion of dimension of a closed convex cone.

The statistical dimension of a closed convex cone $C\subseteq\IR^m$ may be characterized as
  \[ \sdim(C) = \underset{\vct g\sim N(\vct0,\vct I_m)}\E\big[ \|\Pi_C(\vct g)\|^2 \big] . \]
It coincides with the usual dimension if $C$ is the full space, $\sdim(\IR^m)=m$. We now consider families of cones $C_k\subseteq \R^{m_k}$, $D_k\subseteq \R^{n_k}$, and we assume that we have the asymptotics 
\begin{equation}\label{eq:assumpt-prop}
 \lim_{k\to \infty} \frac{\sdim(C_k)}{m_k} = \alpha^2, \quad \lim_{k\to \infty} \frac{\sdim(D_k)}{n_k} = \beta^2, \quad \lim_{k\to \infty} \frac{m_k}{n_k} = \gamma^2
\end{equation}
with $0<\alpha,\beta,\gamma\leq1$.
A typical example for this situation is $C_k\subseteq\IR^{m_k}$ self-dual and $D_k=\IR^{n_k}$, in which case $\sdim(C_k) = \frac{m_k}{2}$ and $\sdim(D_k)=n_k$, so that $\alpha=\frac{1}{\sqrt{2}}$ and $\beta=1$. We will assume that $\beta\neq\alpha\gamma$, and since $\Ren_{C,D}(\vct A)=\Ren_{D,C}(-\vct A^T)$, we may assume without loss of generality
\begin{equation}\label{eq:assumpt-beta>alpha.gamma}
   \beta > \alpha\,\gamma .
\end{equation}

Closely related, but conceptually different, to the statistical dimension is the Gaussian width of a cone, $w(C)=\underset{\vct g\sim N(\vct0,\vct I_m)}\E\big[ \|\Pi_C(\vct g)\| \big]$ (actually, it is the Gaussian width of the intersection with the unit ball $C\cap B^n$, but we adopt a purely conic point of view here). The squared Gaussian width $w^2(C)$ differs from the statistical dimension by at most one~\cite[Prop. 10.2]{edge}, $w^2(C)\leq\sdim(C)\leq w^2(C)+1$. In particular, the asymptotics~\eqref{eq:assumpt-prop} hold for the squared Gaussian width if and only if they hold for the statistical dimension.

\begin{thm}\label{thm:ren-cond-no.}
If $C_k\subseteq \R^{m_k}$, $D_k\subseteq \R^{n_k}$ are families of cones such that the dimensions satisfy the asymptotics~\eqref{eq:assumpt-prop} with $\beta>\alpha\gamma$, and $n_k\to\infty$ for $k\to \infty$, then there exist exceptional sets~$E_k$ with $\Prob\{\mtx{G}\in E_k\}\leq e^{-\Omega(n_k)}$, such that
\begin{align*}
   \E[\RCD{G}{C}{D}\mid \vct G\not\in E_k] & < u_k(C,D) , & u_k(C_k,D_k) \to \frac{1+\gamma}{\beta-\alpha\gamma} .
\end{align*}
In particular, if $n_k=\Omega(k)$, then the weak average-case complexity of $\Ren_{C,D}$ is constant. 
\end{thm}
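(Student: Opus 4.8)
The plan is to exploit the minimum in~\eqref{eq:def-RCD}. Since $\RCD{G}{C}{D}\le\|\vct G\|/\sres{G}{C}{D}$ pointwise, and since $\Ren_{C,D}(\vct A)=\Ren_{D,C}(-\vct A^T)$ together with~\eqref{eq:assumpt-beta>alpha.gamma} lets us work on the ``$\beta>\alpha\gamma$'' side, it is enough to bound $\|\vct G\|$ from above and the restricted singular value $\sres{G}{C}{D}$ from below, each outside an event of probability $e^{-\Omega(n_k)}$, and to control $\Expect\|\vct G\|^2$. The numerator is classical: $\Expect\|\vct G\|\le\sqrt{n_k}+\sqrt{m_k}$, the map $\vct G\mapsto\|\vct G\|$ is $1$-Lipschitz in the Frobenius norm, so Gaussian concentration gives $\Prob\{\|\vct G\|>\sqrt{n_k}+\sqrt{m_k}+t\}\le e^{-t^2/2}$ and $\Expect\|\vct G\|^2=O(n_k)$.

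The heart of the matter is the lower bound on $\sres{G}{C}{D}$. I would write it as a Gaussian min--max, via the identity $\|\Pi_D(\vct y)\|=\max_{\vct z\in D\cap B^n}\ip{\vct z}{\vct y}$ for the cone $D$, as $\sres{G}{C}{D}=\min_{\vct x\in C\cap S^{m-1}}\max_{\vct z\in D\cap B^n}\ip{\vct z}{\vct G\vct x}$, and apply Gordon's Gaussian min--max comparison inequality with index sets $C\cap S^{m-1}$ and $D\cap B^n$ (cf.\ the analysis of such restricted singular values in~\cite{edge}); modulo an additive $O(1)$ coming from the auxiliary term in the standard form of that inequality, one gets, for independent $\vct g\sim N(\vct0,\vct I_n)$, $\vct h\sim N(\vct0,\vct I_m)$,
\[
  \Expect\!\big[\sres{G}{C}{D}\big]\ \ge\ \Expect\min_{\vct x\in C\cap S^{m-1}}\max_{\vct z\in D\cap B^n}\big(\ip{\vct g}{\vct z}+\|\vct z\|\,\ip{\vct h}{\vct x}\big)\ \ge\ \Expect\Big[\max_{\vct z\in D\cap S^{n-1}}\ip{\vct g}{\vct z}\Big]-\Expect\Big[\max_{\vct x\in C\cap S^{m-1}}\ip{\vct h}{\vct x}\Big],
\]
where the last step discards a nonnegative truncation $(\,\cdot\,)_+$ and pulls the $\vct x$-free term out of the minimum, using symmetry of $\vct h$. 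Now $\Expect\max_{\vct x\in C\cap S^{m-1}}\ip{\vct h}{\vct x}\le\Expect\|\Pi_C(\vct h)\|=w(C_k)$, while $\Expect\max_{\vct z\in D\cap S^{n-1}}\ip{\vct g}{\vct z}\ge w(D_k)-\Expect[\|\vct g\|\,\mathbbm 1\{\vct g\in D_k^\polar\}]$, and since $\|\Pi_{D_k}(\vct g)\|$ concentrates around $w(D_k)\sim\beta\sqrt{n_k}$ the event $\{\vct g\in D_k^\polar\}=\{\Pi_{D_k}(\vct g)=\vct0\}$ has probability $e^{-\Omega(n_k)}$, so that correction is $o(1)$. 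Using~\eqref{eq:assumpt-prop} and the relation $w(K)^2\le\sdim(K)\le w(K)^2+1$ gives $w(D_k)=\beta\sqrt{n_k}+o(\sqrt{n_k})$ and $w(C_k)\le\sqrt{\sdim(C_k)}=\alpha\gamma\sqrt{n_k}+o(\sqrt{n_k})$, whence $\Expect[\sres{G}{C}{D}]\ge\mu_k$ for a sequence $\mu_k=w(D_k)-w(C_k)-O(1)$ with $\mu_k/\sqrt{n_k}\to\beta-\alpha\gamma>0$; Lipschitz concentration then yields $\Prob\{\sres{G}{C}{D}<\mu_k-t\}\le e^{-t^2/2}$.

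To turn these estimates into a \emph{conditional} expectation bound with the \emph{exact} limiting constant, I would use a two-scale truncation. Set $E_k:=\{\sres{G}{C}{D}<\mu_k/2\}$, so $\Prob\{\vct G\in E_k\}\le e^{-\mu_k^2/8}=e^{-\Omega(n_k)}$ and $\RCD{G}{C}{D}\le 2\|\vct G\|/\mu_k$ on $E_k^c$. Fix any $t_k\to\infty$ with $t_k=o(\sqrt{n_k})$ and let $T_k:=\{\|\vct G\|\le\sqrt{n_k}+\sqrt{m_k}+t_k\}\cap\{\sres{G}{C}{D}\ge\mu_k-t_k\}\subseteq E_k^c$, so that $\Prob\{T_k^c\}\le 2e^{-t_k^2/2}$ and (for $k$ large) $\RCD{G}{C}{D}\le v_k$ on $T_k$ with $v_k:=(\sqrt{n_k}+\sqrt{m_k}+t_k)/(\mu_k-t_k)\to(1+\gamma)/(\beta-\alpha\gamma)$. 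Splitting $E_k^c$ along $T_k$ and bounding the remainder by Cauchy--Schwarz,
\[
  \Expect\big[\RCD{G}{C}{D}\,\mathbbm 1_{E_k^c}\big]\ \le\ v_k\,\Prob\{T_k\}+\frac{2}{\mu_k}\,\Expect\big[\|\vct G\|\,\mathbbm 1_{T_k^c}\big]\ \le\ v_k+\frac{2}{\mu_k}\sqrt{2\,\Expect\|\vct G\|^2}\;e^{-t_k^2/4}\ =\ v_k+O\big(e^{-t_k^2/4}\big);
\]
dividing by $\Prob\{\vct G\notin E_k\}\ge 1-e^{-\Omega(n_k)}$ gives $\Expect[\RCD{G}{C}{D}\mid\vct G\notin E_k]<u_k$ for a sequence $u_k=(v_k+o(1))/(1-e^{-\Omega(n_k)})\to(1+\gamma)/(\beta-\alpha\gamma)$. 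Finally, if $n_k=\Omega(k)$ then $\Prob\{\vct G\in E_k\}=e^{-\Omega(k)}$ while $u_k=O(1)$, which is precisely a weak expectation of constant order.

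The hard part is the Gordon step: fixing the direction of the min--max comparison, handling the standard auxiliary modifications of Gordon's inequality, checking that discarding the positive part and the event $\{\vct g\in D_k^\polar\}$ costs only $o(\sqrt{n_k})$, and passing from Gaussian widths to statistical dimensions. The other delicate point is the two-scale truncation: a single high-probability event with failure $e^{-\Omega(n_k)}$ would force an $\Omega(\sqrt{n_k})$ slack in \emph{both} $\|\vct G\|$ and $\sres{G}{C}{D}$, yielding only the weaker constant $(1+\gamma+\varepsilon)/(\beta-\alpha\gamma-\varepsilon)$; placing the exceptional set at scale $\sqrt{n_k}$ while integrating the moderate-deviation regime ($t_k\to\infty$, $t_k=o(\sqrt{n_k})$) against the second moment of $\|\vct G\|$ is what recovers the sharp limit $(1+\gamma)/(\beta-\alpha\gamma)$.
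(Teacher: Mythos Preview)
Your argument is correct and reaches the same conclusion, but the route differs from the paper's in the step that extracts the sharp limiting constant. Both proofs rest on Gordon's inequality, which the paper records as the ready-made tail bound~\eqref{eq:gordon} (so your re-derivation of the min--max comparison, including the $\{\vct g\in D_k^\polar\}$ correction, is unnecessary once that theorem is in hand). From there the paper proceeds analytically: in Proposition~\ref{prop:keybound} it writes the conditional expectation as $\frac{a+b}{a-b}$ plus an explicit integral, then applies Laplace's method to show that integral is $O(n_k^{-1/2})$ under the asymptotics~\eqref{eq:assumpt-prop}. You instead proceed probabilistically with a two-scale truncation: the exceptional set $E_k$ sits at scale $\mu_k/2=\Theta(\sqrt{n_k})$ to secure the $e^{-\Omega(n_k)}$ bound, while the auxiliary event $T_k$ at scale $t_k=o(\sqrt{n_k})$ pins the ratio deterministically to $v_k\to(1+\gamma)/(\beta-\alpha\gamma)$, and Cauchy--Schwarz against the second moment $\E\|\vct G\|^2=O(n_k)$ kills the remainder on $E_k^c\setminus T_k$. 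Your approach is more elementary (no Laplace asymptotics) and makes the role of the two scales transparent; the paper's approach has the advantage of producing an explicit finite-$k$ bound~\eqref{eq:keybound} that could be evaluated numerically before passing to the limit.
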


The probabilistic analysis of Renegar's condition number for arbitrary cones has so far been confined to~\cite{AB:15} (though there has been considerable work on the linear programming case, see~\cite{Condition} and the references therein). These results were obtained following the classical tail estimate approach. The new approach of allowing to ignore exponentially small input sets, which loosens the requirements for the tail bounds so that for fixed dimensions we do not even require the bounds to go to zero, opens up new ways to directly exploit concentration of measure phenomena. We will do this through the use of Gordon's escape-through-the-mesh approach~\cite{Gordon,davidson2001local}.

\subsection{Relation to previous work}
Smoothed analysis~\cite{ST:06,buergICM} can also be seen as a way to deal with outliers. In its original Gaussian setting, one interpretation is that smoothed analysis corresponds to a worst-case analysis after Gaussian smoothing of the input space. When perturbing around ill-posed instances, this approach still suffers from the problem that too much weight is given to practically impossible events. An overview of previous work in the probabilistic analysis of numerical algorithms via condition numbers, that includes smoothed analysis and much more, can be found in~\cite{Condition}. 

The point of view of weak average-case analysis is not unusual in the setting of non-asymptotic random matrix theory~\cite{Vershynin2012} and its applications, such as compressive sensing and related fields~\cite{FR:13}. Rather than focussing on precise tail bounds $\Prob\{X> t\}$ that decay to zero for fixed $n$ as $t\to \infty$, and the associated expected values, the emphasis is on showing that a quantity of interest is confined to a certain region with overwhelming probability. 
In the setting of complexity theory this point of view seems to be new.

We would also like to point out that the idea of removing ``bad cases'' from a probabilistic analysis has featured in other probabilistic settings~\cite{venkataramanan2014gaussian,coja2014asymptotic,coja2013chasing,coja2012condensation,DMW:15}.

\subsection{Organisation of paper}
The remaining sections are devoted to the proofs of the three main results, Theorems~\ref{thm:con-cond-no.},~\ref{thm:powerit-main} and~\ref{thm:ren-cond-no.}. Section~\ref{sec:conic} deals with the easiest case, condition numbers inversely proportional to a set of ill-posed inputs. Here, a weak average-case analysis follows in a straight-forward way from a simple probabilistic observation on the effect of conditioning out large deviations from an expectation. Section~\ref{sec:powerit} deals with the power iteration. For convenience, we recreate Kostlan's derivation of the number of iterations needed to approach a dominant eigenvector in terms of the ratio of the largest singular values. The probabilistic analysis and proof of Theorem~\ref{thm:powerit-main} then follows from known results on random matrices from the Gaussian unitary ensemble. Finally, Section~\ref{sec:renegar} presents some basic facts about the biconic feasibility problem and Gordon's inequalities (discussed in more depth in~\cite{AL:14}), followed by the proof of Theorem~\ref{thm:ren-cond-no.}

\subsection{Acknowledgments}
The authors would like to thank Felipe Cucker for encouragement and useful comments on a preliminary draft.

\section{Conic condition numbers}\label{sec:conic}
We begin with some basic probabilistic observations about the effect of conditioning on the expectation of a quantity such as a condition number. We also include the weak average-case analysis of conic condition numbers, since it is a trivial consequence of Lemma~\ref{le:probexp}.
The first observation is that for a weak average-case analysis of a random variable $X$, while we are allowed to remove any small enough set from the probability space, we do best by removing a set of the form $\{X>t\}$.
Therefore, weak average-case analysis of $X$ is equivalent to an average-case analysis of a truncation $X_{\leq t} := X\cdot 1\{X\leq t\}$ for a suitable parameter $t>0$.

\begin{lemma}\label{le:isoprob}
Let $(\Omega,\mathcal{F},\mu)$ be a probability space and $X$ a random variable that is absolutely continuous with respect to $\mu$. Given $\varepsilon>0$, let $t$ be such that $\Prob\{X>t\}=\varepsilon$. Then for any measurable $S\subset \Omega$ with $\mu(S)\leq\varepsilon$,
\begin{equation*}
  \E \big[X\mid X\leq t\big] \leq \E\big[X\mid \overline{S}\big].
\end{equation*}
\end{lemma}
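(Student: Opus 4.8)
The plan is a direct ``bathtub'' comparison between the averages of $X$ over the two sets $A:=\{X\le t\}$ and $B:=\overline{S}$. First I would record the three elementary facts that drive everything. Since $X$ is absolutely continuous with respect to $\mu$, it has no atom at $t$, so a level $t$ with $\Prob\{X>t\}=\varepsilon$ exists and satisfies $\mu(A)=1-\varepsilon$. From $\mu(S)\le\varepsilon$ we get $\mu(B)=1-\mu(S)\ge 1-\varepsilon=\mu(A)$. Finally, $\E[X\mid X\le t]\le t$ simply because $X\le t$ pointwise on $A$.

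The key step is the set decomposition
\[
  \int_B X\,d\mu-\int_A X\,d\mu \;=\; \int_{B\setminus A}X\,d\mu-\int_{A\setminus B}X\,d\mu .
\]
On $B\setminus A\subseteq\{X>t\}$ one has $X> t$, hence $\int_{B\setminus A}X\,d\mu\ge t\,\mu(B\setminus A)$; on $A\setminus B\subseteq\{X\le t\}$ one has $X\le t$, hence $\int_{A\setminus B}X\,d\mu\le t\,\mu(A\setminus B)$. Combining these with the identity $\mu(B\setminus A)-\mu(A\setminus B)=\mu(B)-\mu(A)$ gives
\[
  \int_B X\,d\mu \;\ge\; \int_A X\,d\mu + t\,\big(\mu(B)-\mu(A)\big) .
\]
Note that this uses nothing about the sign of $t$.

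To finish, write $p=\mu(A)$, $q=\mu(B)$ (so $q\ge p>0$) and $e=\E[X\mid A]=\tfrac1p\int_A X\,d\mu$. Dividing the last display by $q$, substituting $\int_A X\,d\mu=pe$, and then using $e\le t$ together with $q-p\ge0$,
\[
  \E[X\mid \overline S] \;=\; \frac1q\int_B X\,d\mu \;\ge\; \frac{pe+t(q-p)}{q}\;\ge\;\frac{pe+e(q-p)}{q}\;=\;e\;=\;\E[X\mid X\le t],
\]
which is the assertion. In the applications $X\ge0$, so both conditional expectations are defined in $[0,\infty]$; the inequality is trivial when $\E[X\mid\overline S]=+\infty$, and otherwise the computation above is literal.

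I do not anticipate a genuine obstacle: the argument is essentially three lines once the decomposition is written down. The only points that need a moment's care are the existence of the threshold $t$ --- this is exactly where the absolute-continuity hypothesis is used, ruling out an atom of $X$ at which $\Prob\{X>t\}$ would jump past $\varepsilon$ --- and applying the two one-sided integral estimates on the correct subsets; neither the sign of $t$ nor the sign of $X$ enters the main inequality.
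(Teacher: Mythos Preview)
Your proof is correct and uses essentially the same set decomposition as the paper: with $A=\{X\le t\}$ and $B=\overline S$, the pieces $B\setminus A$, $A\setminus B$, and $A\cap B$ are exactly the paper's $\overline S_{>t}$, $\{S\mid X\le t\}$, and $\overline S_{\le t}$, and both arguments bound $X$ by $t$ on the two difference sets. The only (minor) deviation is that the paper first reduces to the case $\mu(S)=\varepsilon$ via monotonicity of $t\mapsto\E[X\mid X\le t]$ and then compares integrals over sets of equal measure, whereas you carry the mass defect $\mu(B)-\mu(A)\ge0$ through and absorb it at the end using $\E[X\mid A]\le t$; this is a slight streamlining, not a different idea.
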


\begin{proof}
The conditional expectations $\E \big[X\mid X\leq t\big]$ are monotonically increasing in $t$, which is why we can assume without loss of generality that $\mu(S)=\veps$. Define $\ol S_{\leq t}=\{\vct{\omega}\in \ol S\mid X(\vct{\omega})\leq t\}$ and $\ol S_{>t}=\{\vct{\omega}\in \ol S\mid X(\vct{\omega})>t\}$. Then we have the disjoint decompositions $\ol S =\ol S_{\leq t}\cup\ol S_{>t}$ and
\begin{align*}
  \{\vct{\omega}\in \Omega \mid X(\vct{\omega})\leq t\} & = \ol S_{\leq t}\cup \{\vct{\omega}\in S \mid X(\vct{\omega})\leq t\} .
\end{align*}
Since $\Prob\{X\leq t\}=1-\varepsilon= \mu(\ol S)$, we get
\begin{equation}\label{eq:measure}
  \mu(\ol S_{> t}) = \mu(\{\vct{\omega}\in S\mid X(\vct{\omega})\leq t\}).
\end{equation}
For the conditional probabilities we obtain
\begin{align*}
\E \big[X\mid X\leq t\big] &= \frac{1}{1-\varepsilon}\bigg(\int_{\ol S_{\leq t}}X(\vct{\omega}) d\mu + \int_{\{S\mid X(\vct{\omega})\leq t\}}X(\omega) d\mu\bigg)
\\
&\leq \frac{1}{1-\varepsilon} \bigg(\int_{\ol S_{\leq t}}X(\vct{\omega}) d\mu+t\mu(\{S\mid X(\vct{\omega})\leq t\})\bigg)\\
&\stackrel{\eqref{eq:measure}}{=} \frac{1}{1-\varepsilon} \bigg(\int_{\ol S_{\leq t}}X(\vct{\omega}) d\mu+t\mu(\ol S_{> t})\bigg)
\\&
\leq \frac{1}{1-\varepsilon} \int_{\ol S} X(\vct{\omega}) d\mu = \E \big[X \mid \ol S\big]. \qedhere
\end{align*}
\end{proof}

The next lemma shows how to get finite conditional expectations from tail bounds of order~$t^{-1}$. This will be key for the weak average-case analysis of conic condition numbers and the power iteration method.

\begin{lemma}\label{le:probexp}
Let $X$ be a non-negative random variable on a probability space $(\Omega,\mathcal{F},\mu)$ such that for some $a>0$ and for all $t>a$,
\begin{equation}\label{eq:cheby}
  \Prob\{X>t\}\leq \frac{a}{t} .
\end{equation}
Then, for $t>a$,
\begin{equation*}
 \E\big[X \mid X\leq t\big] \leq \frac{a}{1-\frac{a}{t}}\Big(1-\log\Big(\frac{a}{t}\Big)\Big).
\end{equation*}
\end{lemma}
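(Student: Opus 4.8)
The plan is to bound the numerator $\E[X_{\le t}]$, where $X_{\le t}=X\cdot 1\{X\le t\}$, via the layer-cake (tail integral) representation, and then divide by $\Prob\{X\le t\}$. The key point is that, after writing the expectation of the truncated variable as an integral of tail probabilities over $s\in(0,t)$, we can split the integral at the threshold $a$ and estimate it trivially by $1$ below $a$ and by the hypothesis~\eqref{eq:cheby} above $a$; the logarithm in the statement is then simply $\int_a^t \frac{a}{s}\,ds$.

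Concretely, first I would observe that for the non-negative variable $X_{\le t}$,
\[
  \E\big[X_{\le t}\big] = \int_0^\infty \Prob\{X_{\le t}>s\}\,ds = \int_0^t \Prob\{s<X\le t\}\,ds \le \int_0^t \Prob\{X>s\}\,ds ,
\]
since the event $\{X_{\le t}>s\}$ is empty for $s\ge t$ and equals $\{s<X\le t\}\subseteq\{X>s\}$ for $0\le s<t$. Next I would split the last integral at $a$ and apply the two bounds:
\[
  \int_0^t \Prob\{X>s\}\,ds \le \int_0^a 1\,ds + \int_a^t \frac{a}{s}\,ds = a + a\log\frac{t}{a} = a\Big(1-\log\frac{a}{t}\Big) .
\]
Finally, since $t>a$ we have $\Prob\{X\le t\} = 1-\Prob\{X>t\} \ge 1-\frac{a}{t}>0$, so dividing gives
\[
  \E\big[X\mid X\le t\big] = \frac{\E[X_{\le t}]}{\Prob\{X\le t\}} \le \frac{a}{1-\frac{a}{t}}\Big(1-\log\frac{a}{t}\Big) ,
\]
which is the claimed bound.

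There is no serious obstacle here: the only two points needing a word of justification are (i) the tail-integral identity for $X_{\le t}$, which is the standard Tonelli argument for a non-negative integrand, and (ii) the fact that we bound the denominator $\Prob\{X\le t\}$ from below by $1-a/t$ rather than evaluating it, which is forced because~\eqref{eq:cheby} is only an inequality. An integration-by-parts argument on $\E[X_{\le t}]$ would work equally well, but the layer-cake route avoids any discussion of the distribution function of $X$ at the point $t$.
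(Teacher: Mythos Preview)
Your argument is correct and is essentially identical to the paper's: both express the truncated expectation via the tail integral $\int_0^t \Prob\{X>s\}\,ds$, split at $s=a$, bound trivially on $[0,a]$ and by $a/s$ on $[a,t]$, and divide by $\Prob\{X\le t\}\ge 1-a/t$. If anything, you are slightly more careful in writing $\E[X_{\le t}]\le \int_0^t \Prob\{X>s\}\,ds$ as an inequality (the paper presents this step as an equality).
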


\begin{proof}
 The proof is a straight-forward calculation:
 \begin{align*}
  \E\big[X \mid X\leq t\big] &= \frac{1}{1-\Prob\{X>t\}}\int_{0}^t \Prob\{X>s\} \ ds\\
  &\leq \frac{1}{1-\frac{a}{t}}\left(a+\int_{a}^t \Prob\{X>s\} \ ds\right)
   \leq \frac{a}{1-\frac{a}{t}}(1+\log(t)-\log(a)). \qedhere
 \end{align*}
\end{proof}

Let $\CC(\vct{x})$ be a conic condition number as specified in Theorem~\ref{thm:con-cond-no.}. In~\cite[Thm.~1.1]{BCL:08} it is shown that for this kind of condition numbers,
\begin{equation}\label{eq:tubebound}
   \underset{\vct x\in B(\vct z,\sigma)}{\Prob}\{\CC(\vct x)>t\}  < \frac{13dn}{t\sigma} \quad \text{if } t\geq(1+2d)(n-1)/\sigma .
\end{equation}
Combining this powerful result with the simple Lemma~\ref{le:probexp} yields Theorem~\ref{thm:con-cond-no.}.

\begin{proof}[Proof of Theorem~\ref{thm:con-cond-no.}]
Set $a:=\frac{13 dn}{\sigma}$. Since $a=13dn/\sigma>(1+2d)(n-1)/\sigma$, the tail bound~\eqref{eq:tubebound} holds in particular for $t\geq a$. Setting $E_n=\{\vct{x}\mid \CC(\vct x)>ae^n\}$, we obtain
\begin{equation*}
 \underset{\vct x\in B(\vct z,\sigma)}{\Prob}\{\vct{x}\in E_n\} < e^{-n}.
\end{equation*}
Therefore,
\begin{align*}
 \underset{\vct x\in B(\vct z,\sigma)}{\E}\big[ \CC(\vct x)\mid \vct x\not\in E_n\big] &= \underset{\vct x\in B(\vct z,\sigma)}{\E}\big[ \CC(\vct x)\mid \CC(\vct{x})\leq  ae^n\big] 
 \stackrel{\text{Lem.}~\ref{le:probexp}}{\leq} \frac{a(n+1)}{1-e^{-n}} = \frac{13dn(n+1)}{(1-e^{-n})\sigma}.
\end{align*}
This completes the proof.
\end{proof}

\section{Power iteration}\label{sec:powerit}
We first review a bound on the number of iterations needed to get within a certain distance of the dominant eigenvector. 
We use the notation from Section~\ref{ssec:powerit}.
Recall that for $0<\alpha\leq\pi/2$ and $\vct{x}\in \C^{n}$, $\rho_{\alpha}(\vct{A},\vct{x})$ denotes the minimum number of iterations that bring $\mtx{A}^k\vct{x}$ into an $\alpha$-neighbourhood of $\vct{u}_1$, 
\begin{equation*}
 \rho_{\alpha}(\vct{A},\vct{x}) = \min\{k \mid d_R(\vct{p}_k,\vct{u}_1)\leq \alpha\},
\end{equation*}
and the expected value is defined as
\begin{equation*}
 \rho_{\alpha}(\mtx{A}) = \underset{\vct{x}\sim \Uniform(\CP^{n-1})}\Expect [\rho_{\alpha}(\mtx{A},\vct{x})],
\end{equation*}
where $\Uniform(\CP^{n-1})$ denotes the uniform distribution on complex projective space.
For a linear subspace $L\subseteq \C^n$, write $\Proj_L(\vct{x})=\mathrm{argmin}_{\vct{y}\in L} \|\vct{y}-\vct{x}\|$ for the projection of~$\vct{x}$ onto~$L$, and let $\Proj_{i}(\vct{x}):=\Proj_{\lspan\{\vct{u}_i\}}(\vct{x})$ be the projection onto the linear subspace spanned by the $i$-th eigenvector. The following result can be found in a slightly modified form in~\cite{K:88}; the proof is a variation of the well-known analysis of the power method in terms of the ratio of largest singular values, see for example~\cite[9.2]{wilkinson1965algebraic} or~\cite[7.3]{GoLoan}.
For convenience we provide a complete derivation of the starting point dependent results, since the proof in~\cite{K:88} is rather sketchy.

\begin{thm}[Kostlan~\cite{K:88}]\label{thm:conv-power}
Assume $\vct{x}\in \C^{n}$ is not orthogonal to the dominant eigenvector $\vct{u}_1$ of $\mtx{A}$, $|\langle \vct x,\vct u_1\rangle|<\frac{\pi}{2}$. Then 
\begin{equation*}
  \rho_{\alpha}(\mtx{A},\vct{x}) \geq \frac{\log\cot(\alpha)+\log \norm{\Proj_2(\vct{x})}-\log\norm{\Proj_1(\vct{x})}}{\log |\lambda_1|-\log |\lambda_2|}
\end{equation*}
and
\begin{equation*}
  \rho_{\alpha}(\mtx{A},\vct{x}) \leq \frac{\log\cot(\alpha)+\log \norm{\Proj_{\vct{u}_1^{\perp}}(\vct{x})}-\log\norm{\Proj_1(\vct{x})}}{\log |\lambda_1|-\log |\lambda_2|} .
\end{equation*}
The expected number of steps is bounded by
\begin{equation*}
 \frac{\log\cot(\alpha)}{\log |\lambda_1|-\log |\lambda_2|} \leq \rho_{\alpha}(\mtx{A}) \leq \frac{\frac{1}{2}(\log(n) + 2\, \frac{n-1}{n})+\max\{0,\log \cot\alpha\}}{\log |\lambda_1|-\log |\lambda_2|}.
\end{equation*}
\end{thm}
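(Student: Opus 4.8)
The plan is to rerun the textbook power-method analysis in the eigenbasis of $\mtx A$ to obtain the two pointwise bounds on $\rho_\alpha(\mtx A,\vct x)$, and then to average these over the starting point using the known law of $\norm{\Proj_1(\vct x)}$.

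First I would expand $\vct x=\sum_i c_i\vct u_i$ with $c_i=\langle\vct x,\vct u_i\rangle$, so that $|c_1|=\norm{\Proj_1(\vct x)}$, $|c_2|=\norm{\Proj_2(\vct x)}$ and $\sum_{i\ge2}|c_i|^2=\norm{\Proj_{\vct u_1^\perp}(\vct x)}^2$. Since $\mtx A$ is Hermitian, $\mtx A^k\vct x=\sum_i\lambda_i^k c_i\vct u_i$ and the $\vct u_i$ are orthonormal, so
\begin{equation*}
  \tan^2 d_R(\vct p_k,\vct u_1)=\frac{\norm{\mtx A^k\vct x}^2-|\lambda_1|^{2k}|c_1|^2}{|\lambda_1|^{2k}|c_1|^2}=\frac{\sum_{i\ge2}|\lambda_i|^{2k}|c_i|^2}{|\lambda_1|^{2k}|c_1|^2},
\end{equation*}
which is finite because $c_1\neq0$ by hypothesis (and $c_1\neq0$ almost surely). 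Bounding the numerator below by its $i=2$ term and above by $|\lambda_2|^{2k}\sum_{i\ge2}|c_i|^2$ (using $|\lambda_i|\le|\lambda_2|$ for $i\ge2$) gives
\begin{equation*}
  \Big(\tfrac{|\lambda_2|}{|\lambda_1|}\Big)^{k}\frac{\norm{\Proj_2(\vct x)}}{\norm{\Proj_1(\vct x)}}\ \le\ \tan d_R(\vct p_k,\vct u_1)\ \le\ \Big(\tfrac{|\lambda_2|}{|\lambda_1|}\Big)^{k}\frac{\norm{\Proj_{\vct u_1^\perp}(\vct x)}}{\norm{\Proj_1(\vct x)}} .
\end{equation*}
Next I would invert these. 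Since each ratio $|\lambda_i|/|\lambda_1|\le1$, the middle quantity is non-increasing in $k$, and since $\tan$ is increasing on $[0,\pi/2)$, one has $d_R(\vct p_k,\vct u_1)\le\alpha$ as soon as the right-hand side drops to $\le\tan\alpha$, and $d_R(\vct p_k,\vct u_1)>\alpha$ as long as the left-hand side exceeds $\tan\alpha$. Taking logarithms and solving for $k$ (assuming $|\lambda_1|>|\lambda_2|>0$, so $\log|\lambda_1|-\log|\lambda_2|\in(0,\infty)$; for equal or vanishing $|\lambda_2|$ the stated bounds are vacuous) yields exactly the two displayed inequalities for $\rho_\alpha(\mtx A,\vct x)$ — up to rounding $\rho_\alpha$ up to the next integer, which changes the upper bound by at most an additive $1$ and is irrelevant to what follows.

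For the expectation over $\vct x\sim\Uniform(\CP^{n-1})$ I would proceed as follows. The pointwise lower bound gives
\begin{equation*}
  \rho_\alpha(\mtx A)\ \ge\ \frac{\log\cot\alpha+\Expect\log\norm{\Proj_2(\vct x)}-\Expect\log\norm{\Proj_1(\vct x)}}{\log|\lambda_1|-\log|\lambda_2|},
\end{equation*}
and since $\vct u_1,\vct u_2$ are orthonormal and $\Uniform(\CP^{n-1})$ is unitarily invariant, $\norm{\Proj_1(\vct x)}$ and $\norm{\Proj_2(\vct x)}$ have the same distribution, so the last two terms cancel. For the upper bound, the positivity of the denominator makes $\rho_\alpha(\mtx A,\vct x)$ at most (up to rounding) the positive part of the numerator divided by that denominator; applying $\max\{0,a+b+c\}\le\max\{0,a\}+\max\{0,b\}+\max\{0,c\}$ with $a=\log\cot\alpha$, $b=\log\norm{\Proj_{\vct u_1^\perp}(\vct x)}\le0$, $c=-\log\norm{\Proj_1(\vct x)}\ge0$, the numerator in expectation is bounded by $\max\{0,\log\cot\alpha\}-\Expect\log\norm{\Proj_1(\vct x)}$. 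Finally, for $\vct x$ uniform on $\CP^{n-1}$ one has $\norm{\Proj_1(\vct x)}^2\sim\mathrm{Beta}(1,n-1)$, hence $\Expect\log\norm{\Proj_1(\vct x)}=\tfrac12\big(\psi(1)-\psi(n)\big)=-\tfrac12 H_{n-1}$ with $H_{n-1}=\sum_{j=1}^{n-1}1/j$, and the elementary estimate $H_{n-1}\le\log n+2\tfrac{n-1}{n}$ gives the claimed bound.

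The genuinely non-trivial content is the pointwise convergence estimate, but that is classical; the step needing the most care is the averaging — extracting the cancellation in the lower bound from unitary invariance, bookkeeping the $\max\{0,\cdot\}$ terms so that the $\log\cot\alpha$ contribution is cleanly isolated, and recognizing the $\mathrm{Beta}(1,n-1)$ law of $\norm{\Proj_1(\vct x)}^2$ so that $\Expect\log\norm{\Proj_1(\vct x)}$ can be evaluated in closed form. The rounding of $\rho_\alpha$ to an integer and the degenerate spectra ($|\lambda_1|=|\lambda_2|$, or $\lambda_2=\cdots=\lambda_n=0$) are the only remaining wrinkles, each disposed of in a line.
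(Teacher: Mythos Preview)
Your argument is correct and, for the two pointwise bounds, essentially identical to the paper's: both expand $\vct x$ in the eigenbasis, compute $\mtx A^k\vct x$ coordinatewise, and sandwich $\tan d_R(\vct p_k,\vct u_1)$ between $(|\lambda_2|/|\lambda_1|)^k\norm{\Proj_2(\vct x)}/\norm{\Proj_1(\vct x)}$ and $(|\lambda_2|/|\lambda_1|)^k\norm{\Proj_{\vct u_1^\perp}(\vct x)}/\norm{\Proj_1(\vct x)}$ (the paper phrases this via separate $\cos\alpha$ and $\sin\alpha$ inequalities, you via a single $\tan^2$ formula, but the content is the same). For the expectation bounds the paper does not give a proof at all and simply refers to Kostlan; your averaging argument --- unitary invariance to cancel $\Expect\log\norm{\Proj_1(\vct x)}$ against $\Expect\log\norm{\Proj_2(\vct x)}$ in the lower bound, and the $\mathrm{Beta}(1,n-1)$ law of $\norm{\Proj_1(\vct x)}^2$ to evaluate $-\Expect\log\norm{\Proj_1(\vct x)}=\tfrac12 H_{n-1}$ in the upper bound --- is a clean self-contained derivation that the paper omits.
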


\begin{proof}
 Assume $\norm{\vct{x}}=1$ and 
 write $\vct{x} = a_1\vct{u}_1+\cdots +a_n\vct{u}_n$. Then, in particular, $a_1=\|\Proj_1(\vct{x})\|$, $a_2=\|\Proj_2(\vct{x})\|$, and $a_2\vct{u}_2+\cdots+a_n\vct{u}_n=\Proj_{\vct{u}_1^{\perp}}(\vct{x})$.
 Note that for any $k\geq 1$,
 \begin{equation*}
  \mtx{A}^k\vct{x} = a_1\lambda_1^k\vct{u}_1+\cdots +a_n\lambda_n^k\vct{u}_n .
 \end{equation*}
 Let $k=\rho_{\alpha}(\mtx{A},\vct{x})$ be the smallest integer such that 
 \begin{equation}\label{eq:b1}
  \cos(\alpha)\leq \frac{|\langle \mtx{A}^k\vct{x},\vct{u}_1\rangle|}{\|\mtx{A}^k\vct{x}\|}= \frac{|a_1\lambda_1^k|}{\|\mtx{A}^k\vct{x}\|} = \frac{\|\Proj_1(\vct{x})\| |\lambda_1|^k}{\|\mtx{A}^k\vct{x}\|}.
 \end{equation}
 From the identity $\sin^2(\alpha)+\cos^2(\alpha)=1$ we obtain
 \begin{equation}\label{eq:b2}
  \sin(\alpha) \geq \frac{\sqrt{a_2^2|\lambda_2|^{2k}+\cdots+a_n^2|\lambda_n|^{2k}}}{\|\mtx{A}^k\vct{x}\|}\geq \frac{|\lambda_2|^k\|\Proj_{2}(\vct{x})\|}{\|\mtx{A}^k\vct{x}\|}.
 \end{equation}
Putting~\eqref{eq:b1} and~\eqref{eq:b2} together, we get
\begin{equation*}
 \cot(\alpha) \leq \left(\frac{|\lambda_1|}{|\lambda_2|}\right)^k \frac{\|\Proj_1(\vct{x})\|}{\|\Proj_2(\vct{x})\|} \ \Rightarrow \ k \geq \frac{\log\cot(\alpha)+\log \norm{\Proj_2(\vct{x})}-\log\norm{\Proj_1(\vct{x})}}{\log |\lambda_1|-\log |\lambda_2|}.
\end{equation*}
This shows the first inequality. For the second inequality, let $k<\rho_\alpha(\mtx{A},\vct{x})$. Then after $k$ iterations we are still outside an $\alpha$ neighbourhood of $\vct{u}_1$, so that
\begin{equation}\label{eq:c1}
  \cos(\alpha)\geq \frac{|\langle \mtx{A}^k\vct{x},\vct{u}_1\rangle|}{\|\mtx{A}^k\vct{x}\|}= \frac{|a_1\lambda_1^k|}{\|\mtx{A}^k\vct{x}\|}= \frac{\|\Proj_1(\vct{x})\| |\lambda_1|^k}{\|\mtx{A}^k\vct{x}\|}.
 \end{equation}
 Similar as in~\eqref{eq:b2}, we get
 \begin{equation}\label{eq:c2}
  \sin(\alpha) \leq \frac{\sqrt{a_2^2|\lambda_2|^{2k}+\cdots+a_n^2|\lambda_n|^{2k}}}{\|\mtx{A}^k\vct{x}\|}\leq \frac{|\lambda_2|^k\|\Proj_{\vct{u}_1^{\perp}}(\vct{x})\|}{\|\mtx{A}^k\vct{x}\|}.
 \end{equation}
 Combining~\eqref{eq:c1} and~\eqref{eq:c2} gives
 \begin{equation*}
  \cot(\alpha) \geq \left(\frac{|\lambda_1|}{|\lambda_{2}|}\right)^k \frac{\|\Proj_1(\vct{x})\|}{\|\Proj_{\vct{u}_1^{\perp}}(\vct{x})\|} \ \Rightarrow \ k \leq \frac{\log\cot(\alpha)+\log \norm{\Proj_{\vct{u}_1^{\perp}}(\vct{x})}-\log\norm{\Proj_1(\vct{x})}}{\log |\lambda_1|-\log |\lambda_2|},
 \end{equation*}
which establishes the second inequality.

For the expected values of the bounds we refer to Kostlan's work~\cite{K:88}.
\end{proof}

\subsection{Weak average-case analysis of power iteration}

Theorem~\ref{thm:conv-power} implies that the relevant probability in the average-case analysis of power iteration is
\begin{equation*}
 \Prob\bigg\{\frac{1}{\log |\lambda_1|-\log|\lambda_2|}> x\bigg\} = \Prob\bigg\{\frac{|\lambda_1|}{|\lambda_2|}<e^{\frac{1}{x}}\bigg\}.
\end{equation*}
Let $\lambda_{\mx}$, $\lambda_{\nx}$, $\lambda_{\mn}$ and $\lambda_{\lx}$ denote the largest, second largest, smallest and second smallest eigenvalue of a GUE matrix $\mtx{H}$, respectively.
Keep in mind that this refers to the eigenvalues themselves, not their absolute values, which underlie the ordering $|\lambda_1|\geq\dots\geq|\lambda_n|$.
Before proceeding, we collect some known results on the distribution of the eigenvalues of a GUE matrix. To ease notation, in what follows, $C$ and $c$ are used for absolute constants which may change from line to line.

\begin{enumerate}
  \item The probability that all eigenvalues are positive (or all are negative) is exponentially small~\cite{DM:08},
    \begin{equation}\label{eq:indefinite}
     \Prob\{\lambda_{\mn}>0\}\leq Ce^{-c n^2}
    \end{equation}
    for some constants $C$ and $c$.
  \item The largest eigenvalue of a GUE matrix satisfies
 \begin{equation}\label{eq:eig-tail}
  \Prob\{\lambda_{\mx}\geq \sqrt{n}(2+\veps)\}\leq Ce^{-cn\veps^{3/2}}
 \end{equation}
for some constants $C$ and $c$ (e.g., ~\cite[Prop. 2.1]{ledoux2007deviation}, note the slightly different form). We will use this with $\veps=1$.
  \item The smallest gap $\delta_{\mn}$ between consecutive eigenvalues satisfies
\begin{equation}\label{eq:small-gap}
 \Prob\{\delta_{\mn}\leq \delta/\sqrt{n}\}\leq n\delta^3+e^{-cn}
\end{equation}
for some constant $c$. This is from~\cite{erdHos2010wegner}, as stated in~\cite[(2)]{NTV:15}.
\end{enumerate}

\begin{proof}[Proof of Theorem~\ref{thm:powerit-main}.]
Assume $x\geq 1$.
Throughout the proof, we will use the fact that $-\lambda_{\mn}$ and $\lambda_{\mx}$ have the same distribution, as well as $-\lambda_{\lx}$ and $\lambda_{\nx}$.
For the relationship between the singular values and the eigenvalues we have the following two cases:
\begin{enumerate}
    \item The largest and second largest singular values come from the two largest or from the two smallest eigenvalues, that is, $\lambda_1=\lambda_\mx$ and $\lambda_2=\lambda_\nx$, or $\lambda_1=\lambda_\mn$ and $\lambda_2=\lambda_\lx$.
    \item The largest and second largest singular values come from the largest (positive) and smallest (negative) eigenvalues, that is, $\lambda_1=\lambda_\mx$ and $\lambda_2=\lambda_\mn$, or $\lambda_1=\lambda_\mn$ and $\lambda_2=\lambda_\mx$.
  \end{enumerate}
Using the union bound we may consider these cases separately:
\begin{align*}
   \Prob\bigg\{\frac{|\lambda_1|}{|\lambda_2|}<e^{\frac{1}{x}}\bigg\} & \leq \Prob\bigg\{\frac{|\lambda_{\mx}|}{|\lambda_{\nx}|}<e^{\frac{1}{x}} \bigg\}+\Prob\bigg\{\frac{|\lambda_{\mn}|}{|\lambda_{\lx}|}<e^{\frac{1}{x}} \bigg\}
\\ & \qquad +\Prob\bigg\{e^{-\frac{1}{x}}<\frac{|\lambda_{\mx}|}{|\lambda_{\mn}|}<e^{\frac{1}{x}}\bigg\}
\end{align*}
Clearly, the first two terms on the right-hand side are subject to the same analysis.

\smallskip

\noindent{\bf Adjacent eigenvalues.}
We consider the case where the largest and second largest singular values are the largest and second largest (positive) eigenvalues of the matrix. Note that for $x\geq 1$ we have $e^{1/x}-1<2/x$, which will be used in the sequel. We will also generically use $C$ and $c$ for constants that may vary within one derivation.
We have for $x\geq1$,
\begin{align*}
   \Prob\bigg\{\frac{|\lambda_{\mx}|}{|\lambda_{\nx}|}<e^{\frac{1}{x}}\bigg\} & \leq 
    \Prob\bigg\{\frac{\lambda_{\mx}}{\lambda_{\nx}}<e^{\frac{1}{x}}\bigg\} = \Prob\bigg\{\frac{\lambda_{\mx}-\lambda_{\nx}}{\lambda_{\nx}}<e^{1/x}-1\bigg\}
\\ & \leq \Prob\bigg\{\frac{\delta_{\mathrm{min}}}{\lambda_{\mx}}<\frac{2}{x}\bigg\}
\stackrel{\eqref{eq:eig-tail}, \veps=1}{\leq} \Prob\bigg\{ \delta_{\mn}<\frac{6n}{x\sqrt{n}}\bigg\}+Ce^{-cn}
\\ & \stackrel{\eqref{eq:small-gap}}{<} 216\frac{n^4}{x^3}+Ce^{-cn}.
\end{align*}
For $x>6n^{4/3}$ and $n$ large enough, the bound becomes non-trivial.

\smallskip

\noindent{\bf Extreme eigenvalues.} For the ratio of the eigenvalues at the edge we have for $x\geq 1$,
\begin{align*}
 \Prob & \bigg\{e^{-\frac{1}{x}} <\frac{|\lambda_{\mx}|}{|\lambda_{\mn}|}<e^{\frac{1}{x}}\bigg\} \leq \Prob\bigg\{e^{-\frac{1}{x}}<\frac{\lambda_{\mx}}{-\lambda_{\mn}}<e^{\frac{1}{x}}\bigg\} + 2\Prob\{\lambda_{\mn}>0\}
 \\ & \stackrel{\eqref{eq:indefinite}}{\leq}
 \Prob\bigg\{\frac{\lambda_{\mx}}{-\lambda_{\mn}}<e^{\frac{1}{x}}\bigg\} - \Prob\bigg\{\frac{\lambda_{\mx}}{-\lambda_{\mn}}\leq e^{-\frac{1}{x}}\bigg\}+Ce^{-cn^2}
 \\ & = \Prob\bigg\{\frac{\lambda_{\mx}}{-\lambda_{\mn}}<e^{\frac{1}{x}}\bigg\} - \bigg(1-\Prob\bigg\{\frac{\lambda_{\mx}}{-\lambda_{\mn}}> e^{-\frac{1}{x}}\bigg\}\bigg)+Ce^{-cn^2}
 \\ & = 2\Prob\bigg\{\frac{\lambda_{\mx}}{-\lambda_{\mn}}<e^{\frac{1}{x}}\bigg\}-1 + Ce^{-cn^2}
 \\ & = 2\Prob\bigg\{\frac{\lambda_{\mx}+\lambda_{\mn}}{-\lambda_{\mn}}<e^{\frac{1}{x}}-1\bigg\}-1+ Ce^{-cn^2}
 \\ & \leq 2\Prob\bigg\{\frac{\lambda_{\mx}+\lambda_{\mn}}{-\lambda_{\mn}}<\frac{2}{x}\bigg\}-1+ Ce^{-cn^2}
\\ & \stackrel{\eqref{eq:eig-tail}, \veps=1}{\leq} 2\Prob\bigg\{\lambda_{\mx}+\lambda_{\mn}<\frac{6\sqrt{n}}{x}\bigg\}-1+ Ce^{-cn^2} .
\end{align*}
Since $\lambda_{\mx}$ and $-\lambda_{\mn}$ are equally distributed, the difference $\lambda_{\mx}-(-\lambda_{\mn})$ is symmetric around the origin, so that we obtain
  \[ 2\Prob\bigg\{\lambda_{\mx}+\lambda_{\mn}<\frac{6\sqrt{n}}{x}\bigg\}-1 = \Prob\bigg\{-\frac{6}{x}<\frac{\lambda_{\mx}}{\sqrt{n}}+\frac{\lambda_{\mn}}{\sqrt{n}}<\frac{6}{x}\bigg\} . \]
Now consider the normalized variables 
\begin{equation*}
 \tilde{\lambda}_{\mx} := n^{2/3}\bigg(\frac{\lambda_{\mx}}{\sqrt{n}}-2\bigg), \qquad \tilde{\lambda}_{\mn} := n^{2/3}\bigg(\frac{\lambda_{\mx}}{\sqrt{n}}+2\bigg).
\end{equation*}
The maximum $M$ of the density of $\tilde{\lambda}_{\mx}+\tilde{\lambda}_{\mn}$ is bounded, as can be deduced directly from the joint distribution of the eigenvalues of the GUE ensemble. 
We thus obtain
\begin{equation*}
 \Prob\bigg\{-\frac{6n^{2/3}}{x}<\tilde{\lambda}_{\mx}+\tilde{\lambda}_{\mn}<\frac{6n^{2/3}}{x}\bigg\} \leq \frac{12 M n^{2/3}}{x}.
\end{equation*}

In summary, we have the bound 
\begin{equation}\label{eq:final-ext-eig}
\Prob\bigg\{e^{-\frac{1}{x}}<\frac{|\lambda_{\mx}|}{|\lambda_{\mn}|}\leq e^{\frac{1}{x}}\bigg\} \leq \frac{12 M n^{2/3}}{x}+Ce^{-cn} .
\end{equation}

\smallskip

\noindent{\bf Combined bound.} 
Putting the bounds together, we get
\begin{equation*}
 \Prob\bigg\{\frac{1}{\log|\lambda_1|-\log|\lambda_2|}>x\bigg\} < \frac{12Mn^{2/3}}{x}+\frac{432 n^4}{x^3}+Ce^{-cn}
\end{equation*}
for some constant $K$.
This bound is non-trivial for $x>Kn^{4/3}$ for a suitable constant~$K$. Set $x_0=Kn^{4/3}e^{cn}$ and
\begin{equation*}
 E_n := \bigg\{\mtx{H} \;\Big|\; \frac{1}{\log|\lambda_1|-\log|\lambda_2|}>x_0\bigg\}.
\end{equation*}
Then
\begin{equation*}
 \Prob\left\{\mtx{H}\in E_n\right\} < Ce^{-cn}
\end{equation*}
for suitable absolute constants $C$ and $c$.
We can apply a variation of the proof of Lemma~\ref{le:probexp} (with an adjustment to take into account the $x^{-3}$ and the exponential term), with $a=Kn^{4/3}$ and $t=x_0$, to get
\begin{equation*}
 \Expect\Big[(\log|\lambda_1|-\log|\lambda_2|)^{-1} \,\Big|\, E_n\Big] < \frac{Kn^2}{1-e^{-cn}}
\end{equation*}
for some absolute constant $K$.
Incorporating the numerator from the bound on the power iteration gives the desired result.
\end{proof}

\begin{remark}
 With a more careful analysis, the constants in the bounds can be explicitly estimated, but we haven't done so as the first aim was to establish a polynomial bound.
\end{remark}

\begin{remark}
There are potential alternative ways to derive the results from this section.
 One such approach is based on the joint distribution of the singular values, as studied by Edelman and La Croix~\cite{ELC:14}. There, it is shown that the singular values of a GUE matrix can be characterized as the union of the singular values of two independent bidiagonal matrices with $\chi$-distributed entries. By an interlacing property it appears that, with high probability, the largest two singular values correspond to the largest singular values of each of these matrices. 
 In a different direction, by examining the analysis in~\cite{bianchi2010asymptotic,bornemann2010asymptotic} (for asymptotic independence of the extreme eigenvalues) and ~\cite{johnstone2012fast} (for the rate of convergence to Tracy-Widom) might also hope to carry out the above analysis via approximation by the Tracy-Widom distribution. 
\end{remark}

\section{Renegar's condition number}\label{sec:renegar}

Note that we have a complete symmetry between~(P') and~(D') via the exchange of $\vct A$ by $-\vct A^T$ and by a swap between $C$ and $D$.
We denote the set of primal feasible instances and the set of dual feasible instances by
\begin{align*}
   \P(C,D) & := \{\vct A\in\IR^{n\times m}\mid \text{(P') is feasible}\} , & \D(C,D) & := \{\vct A\in\IR^{n\times m}\mid \text{(D') is feasible}\},
\end{align*}
and we call
  \[ \Sigma(C,D) := \P(C,D)\cap \D(C,D) \]
the set of \emph{ill-posed} inputs. Indeed, it can be shown that $\P(C,D)$ and $\D(C,D)$ are both closed, the union of $\P(C,D)$ and $\D(C,D)$ is the whole input space~$\IR^{n\times m}$ unless $C=D=\IR^m$, and the probability that a Gaussian matrix lies in $\Sigma(C,D)$ is zero~\cite[Sec.~2.2]{AL:14}.

As for the relation of this generalized feasibility problem to conically restricted linear operators, observe that $\vct A\in\P(C,D)$ if and only if $\sres{A}{C}{D}=0$, and $\vct A\in\D(C,D)$ if and only if $\srestm{A}{D}{C}=0$. Moreover, we have~\cite[Sec.~2.2]{AL:14}
\begin{align}\label{eq:sres-dist}
  \dist(\vct A,\P) & = \sres{A}{C}{D} , & \dist(\vct A,\D) & = \srestm{A}{D}{C} ,
\end{align}
where $\dist$ denotes the Euclidean distance.

The following theorem is the consequence of a classic result by Gordon~\cite{Gordon}. For various derivations, see~\cite{davidson2001local,FR:13}. The form stated here, for a map between two cones, is from~\cite{AL:14}. Here, we use the Gaussian width $w(C)=\underset{\vct g\sim N(\vct0,\vct I_m)}\E\big[ \|\Pi_C(\vct g)\| \big]$.

\begin{thm}
Let $C\subseteq\IR^m$, $D\subseteq\IR^n$ closed convex cones, and let $\vct G\in\IR^{n\times m}$ be a Gaussian matrix. Then for $\lambda\geq0$,
\begin{align}\label{eq:gordon}
\nonumber
   \Prob\big\{\nres{G}{C}{D}\geq \sdimw(C)+\sdimw(D)+\lambda\big\} & \leq e^{-\frac{\lambda^2}{2}}, 
\\
\Prob\big\{\sres{G}{C}{D}\leq \sdimw(D)-\sdimw(C)-\lambda\big\} & \leq e^{-\frac{\lambda^2}{2}}.
\end{align}
\end{thm}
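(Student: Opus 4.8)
The plan is to dualise both restricted quantities into, respectively, a min--max and a maximum of the Gaussian bilinear process $(\vct x,\vct y)\mapsto\vct y^T\vct G\vct x$, apply Gordon's Gaussian comparison inequalities to replace this coupled process by a decoupled one, evaluate the decoupled expectations in terms of the cone widths $w(C)$ and $w(D)$, and finally upgrade the two expectation bounds to the stated Gaussian tail bounds via the Lipschitz concentration inequality.

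First I would rewrite the restricted operator norm and the restricted singular value via Moreau's decomposition. For a closed convex cone $D$ and any $\vct v\in\IR^n$ one has $\vct v=\Pi_D(\vct v)+\Pi_{D^\polar}(\vct v)$ with $\langle\Pi_D(\vct v),\Pi_{D^\polar}(\vct v)\rangle=0$, whence $\|\Pi_D(\vct v)\|=\max\{\langle\vct v,\vct y\rangle\mid\vct y\in D,\ \|\vct y\|\le1\}$, the maximum being attained at $\Pi_D(\vct v)/\|\Pi_D(\vct v)\|$ when $\Pi_D(\vct v)\ne\vct0$. Taking $\vct v=\vct G\vct x$ yields
\begin{align*}
 \sres{G}{C}{D} &= \min_{\vct x\in C\cap S^{m-1}}\ \max_{\vct y\in D,\,\|\vct y\|\le1}\vct y^T\vct G\vct x, &
 \nres{G}{C}{D} &= \max_{\vct x\in C\cap S^{m-1},\ \vct y\in D,\,\|\vct y\|\le1}\vct y^T\vct G\vct x,
\end{align*}
so that both are built from the same Gaussian bilinear form over bounded index sets.

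Next I would apply Gordon's comparison with the decoupled process $\|\vct x\|\langle\vct g,\vct y\rangle+\|\vct y\|\langle\vct h,\vct x\rangle$ for independent $\vct g\sim N(\vct0,\vct I_n)$, $\vct h\sim N(\vct0,\vct I_m)$. Its ``maximum over a product'' form (Chevet's inequality) gives $\E\,\nres{G}{C}{D}\le\E\max_{\vct y\in D,\,\|\vct y\|\le1}\langle\vct g,\vct y\rangle+\E\max_{\vct x\in C\cap S^{m-1}}\langle\vct h,\vct x\rangle$; by Moreau the first term equals $\E\|\Pi_D(\vct g)\|=w(D)$ and the second is $\le\E\|\Pi_C(\vct h)\|=w(C)$, so $\E\,\nres{G}{C}{D}\le w(C)+w(D)$. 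Its min--max form gives $\E\,\sres{G}{C}{D}\ge\E\min_{\vct x\in C\cap S^{m-1}}\max_{\vct y\in D,\,\|\vct y\|\le1}\big(\langle\vct g,\vct y\rangle+\|\vct y\|\langle\vct h,\vct x\rangle\big)$; optimising the inner maximum over scalings $\vct y=t\vct v$ with $\vct v\in D\cap S^{n-1}$, $t\in[0,1]$, collapses it to $\big(\max_{\vct v\in D\cap S^{n-1}}\langle\vct g,\vct v\rangle+\langle\vct h,\vct x\rangle\big)_+$, and since $(\cdot)_+$ is non-decreasing, minimising over $\vct x$ and discarding the positive part leaves $\max_{\vct v\in D\cap S^{n-1}}\langle\vct g,\vct v\rangle-\max_{\vct x\in C\cap S^{m-1}}\langle\vct h,\vct x\rangle$, whose expectation is $\ge w(D)-w(C)$ up to an additive term that is exponentially small in $n$ for the cones under consideration (and is $0$ once $D$ contains a line). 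Hence $\E\,\sres{G}{C}{D}\ge w(D)-w(C)$.

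Finally, both $\vct G\mapsto\sres{G}{C}{D}$ and $\vct G\mapsto\nres{G}{C}{D}$ are $1$-Lipschitz in the Frobenius norm, being a minimum, respectively maximum, over $\vct x\in C\cap S^{m-1}$ of $\vct G\mapsto\|\Pi_D(\vct G\vct x)\|=\dist(\vct G\vct x,D^\polar)$, a composition of $1$-Lipschitz maps; the Gaussian concentration inequality $\Prob\{f(\vct G)\ge\E f(\vct G)+\lambda\}\le e^{-\lambda^2/2}$ and its downward counterpart then convert the two expectation bounds into~\eqref{eq:gordon}. I expect the main obstacle to be the Gordon step together with this width bookkeeping: one must invoke the Gaussian min--max inequality in a form that applies to the sphere/ball index sets at hand, and then correctly identify the decoupled Gaussian suprema with the projection-based widths $w(C),w(D)$ through Moreau's decomposition, tracking the (harmless) gap between sphere- and ball-restricted suprema. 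Everything else---the duality rewriting and the Lipschitz/concentration step---is routine; and since the two-cone Gordon inequality is precisely the one recorded in~\cite{AL:14}, one may just cite it and reserve the effort for the width identities.
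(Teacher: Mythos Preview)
The paper does not prove this theorem: it is stated as a known consequence of Gordon's comparison inequality, with pointers to \cite{Gordon,davidson2001local,FR:13} for derivations and to \cite{AL:14} for the two-cone formulation used here. Your sketch is precisely the standard argument found in those references---dualise through Moreau, apply Slepian/Gordon to bound the expectations by $w(C)+w(D)$ and $w(D)-w(C)$, then upgrade to tail bounds via Gaussian Lipschitz concentration---so there is no methodological difference to report; you are supplying the proof the paper merely cites.

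One small point on the lower-bound bookkeeping. After Gordon, your parametrisation $\vct y=t\vct v$ collapses the inner maximum to $\big(\max_{\vct v\in D\cap S^{n-1}}\langle\vct g,\vct v\rangle+\langle\vct h,\vct x\rangle\big)_+$, and you then need the \emph{sphere} supremum over $D$ to dominate $w(D)=\E\|\Pi_D(\vct g)\|$; it does not in general (it is $\le\|\Pi_D(\vct g)\|$, with a gap on $\{\vct g\in D^\polar\}$), which is why you insert the ``up to an exponentially small term'' hedge. This detour is unnecessary: lower-bound the inner maximum by the single test point $\vct y=\Pi_D(\vct g)/\|\Pi_D(\vct g)\|$ on the event $\Pi_D(\vct g)\neq\vct0$, and by $\vct y=\vct0$ otherwise, to obtain $\|\Pi_D(\vct g)\|+\langle\vct h,\vct x\rangle$ (respectively $0$) directly. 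Minimising over $\vct x\in C\cap S^{m-1}$, taking expectations, and using $\E\max_{\vct x\in C\cap S^{m-1}}\langle\vct h,\vct x\rangle\le w(C)$ together with independence of $\vct g,\vct h$ yields $\E\,\sigma_{C\to D}(\vct G)\ge w(D)-w(C)$ with no correction term. With that adjustment your sketch is complete.
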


Before getting to the proof of the asymptotic statement in Theorem~\ref{thm:ren-cond-no.} we use Gordon's estimate~\eqref{eq:gordon} for the following finite-dimensional bound.

\begin{proposition}\label{prop:keybound}
Let $C\subseteq\IR^m$, $D\subseteq\IR^n$ closed convex cones with $\sdimw(D)-\sdimw(C)>2\sqrt{2}$, and let $\vct G\in\IR^{n\times m}$ be a Gaussian matrix. Then there exists a set $E\subseteq\R^{n\times m}$ with $\Prob\{\vct G\in E\}<\veps := 2e^{-(\sdimw(D)-\sdimw(C))^2/8}$, such that
\begin{equation}\label{eq:keybound}
 \E[\RCD{G}{C}{D}\mid \vct G\not\in E] < \frac{1}{1-\veps}\, \bigg(\frac{\sdimw(\R^n)+\sdimw(\R^m)}{\sdimw(D)-\sdimw(C)} + 4 \int_{\frac{b}{a}}^{\frac{a+b}{2a}} \frac{e^{-\frac{(as-b)^2}{2}}}{(1-s)^2}\,ds\bigg) ,
\end{equation}
where $a = \frac{1}{2}\big(\sdimw(\R^m)+\sdimw(\R^n)+\sdimw(D)-\sdimw(C)\big)$, $b = \frac{1}{2}\big(\sdimw(\R^m)+\sdimw(\R^n)-(\sdimw(D)-\sdimw(C)\big)$.
\end{proposition}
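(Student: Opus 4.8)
The plan is to combine the two Gordon estimates in~\eqref{eq:gordon} into a tail bound for $\RCD{G}{C}{D}$ and then integrate. Recall that $\RCD{G}{C}{D} = \min\{\|\vct G\|/\sres{G}{C}{D},\,\|\vct G\|/\srestm{G}{D}{C}\} \le \|\vct G\|/\sres{G}{C}{D}$, so it suffices to control the numerator $\|\vct G\| = \nres{G}{C}{D}$ from above and the denominator $\sres{G}{C}{D}$ from below, both with high probability. Writing $\Delta := \sdimw(D)-\sdimw(C) > 2\sqrt 2$ and $S := \sdimw(\R^n)+\sdimw(\R^m)$, the first line of~\eqref{eq:gordon} gives $\nres{G}{C}{D} < S + \lambda_1$ except on a set of probability $e^{-\lambda_1^2/2}$, and the second gives $\sres{G}{C}{D} > \Delta - \lambda_2$ except on a set of probability $e^{-\lambda_2^2/2}$, valid as long as $\lambda_2 < \Delta$ so the lower bound is positive.

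First I would define the exceptional set $E$ as the union of the "bad numerator" event $\{\nres{G}{C}{D} \ge S + \lambda_1\}$ for a fixed choice of $\lambda_1$ and the "bad denominator" event $\{\sres{G}{C}{D} \le \Delta - \lambda_2\}$ ranging over the relevant $\lambda_2$; concretely it is cleanest to set $E = \{\nres{G}{C}{D} \ge S + \tfrac{\Delta}{2}\} \cup \{\sres{G}{C}{D} \le \tfrac{\Delta}{2}\}$, which by~\eqref{eq:gordon} with $\lambda_1 = \lambda_2 = \Delta/2$ has probability at most $2e^{-\Delta^2/8} = \veps$. On the complement $\ol E$ we have the deterministic bound $\RCD{G}{C}{D} \le (S+\Delta/2)/(\Delta/2)$; but that only gives a bound of the right order, not the sharp constant $\frac{1+\gamma}{\beta-\alpha\gamma}$ claimed in Theorem~\ref{thm:ren-cond-no.}, so instead I would keep the conditional expectation and integrate the tail. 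Using $\E[X \mid \ol E] \le \frac{1}{1-\veps}\int_0^\infty \Prob\{X > t\}\,dt$ for the nonnegative variable $X = \RCD{G}{C}{D}\cdot\mathbbm 1_{\ol E}$, I would split the integral: the contribution up to $t_0 := S/\Delta$ is bounded trivially by $t_0 = (\sdimw(\R^n)+\sdimw(\R^m))/(\sdimw(D)-\sdimw(C))$, the first term in~\eqref{eq:keybound}; and for $t > t_0$ I bound $\Prob\{\|\vct G\|/\sres{G}{C}{D} > t\}$ by conditioning on the value $s = \sres{G}{C}{D}$ and using the numerator tail $\Prob\{\nres{G}{C}{D} > ts\} \le e^{-(ts - S)^2/2}$ when $ts > S$.

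The main technical step is the change of variables that turns this into the stated integral. The cleanest route is to bound $\Prob\{\RCD{G}{C}{D} > t\}$ for $t > t_0$ directly via a union over the two ways the ratio can be large, i.e. either $\sres{G}{C}{D}$ is small or $\|\vct G\|$ is large; parametrising by $s = \|\vct G\|/t$ (the threshold value the denominator would need to exceed) and writing the denominator lower-tail from~\eqref{eq:gordon} as $\Prob\{\sres{G}{C}{D} \le \Delta - \lambda\} \le e^{-\lambda^2/2}$, one substitutes to express everything in terms of the ratio $s$ of denominator to numerator scales. This is where the constants $a = \tfrac12(S + \Delta)$ and $b = \tfrac12(S - \Delta)$ appear: after the substitution the exponent $-\lambda^2/2$ becomes $-(as-b)^2/2$, the Jacobian produces the $(1-s)^{-2}$ factor, and the limits of integration $s \in [b/a, (a+b)/(2a)]$ are exactly the range where both the numerator bound $ts > S$ and the denominator bound $\Delta - \lambda > 0$ are simultaneously informative. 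I expect the bookkeeping in this substitution — pinning down the integration limits and the $4$ out front, and checking that outside $[b/a,(a+b)/(2a)]$ the relevant tail probability is either zero or already absorbed into $\veps$ — to be the main obstacle; the probabilistic input is entirely supplied by~\eqref{eq:gordon}, and the rest is a careful but routine one-variable integral estimate. Finally, dividing by $1-\veps$ to pass from the truncated integral to the genuine conditional expectation $\E[\,\cdot \mid \vct G \notin E]$ yields~\eqref{eq:keybound}.
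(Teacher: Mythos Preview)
Your overall architecture matches the paper's: combine both Gordon estimates into a tail bound for $\|\vct G\|/\sres{G}{C}{D}$, define $E$ so that on $\ol E$ this ratio is finite (hence equals $\RCD{G}{C}{D}$), split the tail integral at $t_0=S/\Delta$, and change variables to reach~\eqref{eq:keybound}. But the central step --- how to get the tail bound on the ratio and which substitution to make --- is not correct as you describe it.

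A minor slip first: $\|\vct G\|$ is the full operator norm $\nres{G}{\R^m}{\R^n}$, not $\nres{G}{C}{D}$; the upper tail you need is the first line of~\eqref{eq:gordon} applied with $C=\R^m$, $D=\R^n$.

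More seriously, ``conditioning on the value $s=\sres{G}{C}{D}$'' and then applying the unconditional numerator tail does not work: $\|\vct G\|$ and $\sres{G}{C}{D}$ are dependent functions of $\vct G$, and Gordon only gives tail bounds, not a density to integrate against. Your fallback, ``parametrising by $s=\|\vct G\|/t$'', is also not an integration variable --- it is random. What the paper actually does (and what your union-bound remark is reaching for) is to use the \emph{same} deviation $\lambda$ in both lines of~\eqref{eq:gordon} simultaneously: for $0\le\lambda<\Delta$,
\[
  \Prob\Big\{\tfrac{\|\vct G\|}{\sres{G}{C}{D}}\ge \tfrac{S+\lambda}{\Delta-\lambda}\Big\}\le 2e^{-\lambda^2/2}.
\]
Writing $t=(S+\lambda)/(\Delta-\lambda)$ and inverting gives $\lambda=(t\Delta-S)/(t+1)$. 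The substitution that produces the integral in~\eqref{eq:keybound} is then the purely deterministic change of variable $s=(t-1)/(t+1)$: one checks directly that $\lambda=as-b$ and $dt=2(1-s)^{-2}\,ds$, which is exactly where the exponent $-(as-b)^2/2$, the factor $(1-s)^{-2}$, the constant~$4$, and the limits $s\in[b/a,(a+b)/(2a)]$ (i.e.\ $t\in[S/\Delta,\,2S/\Delta+1]$, i.e.\ $\lambda\in[0,\Delta/2]$) come from. The paper takes $E=\{\|\vct G\|/\sres{G}{C}{D}\ge t_\veps\}$ with $t_\veps=2S/\Delta+1$; your set is larger but still has measure $\le\veps$, and on its complement the ratio is $\le t_\veps$ as well, so either choice of $E$ leads to the same bound once the substitution above is carried out.
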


\begin{proof}
To ease the notation, set $w_m=w(\R^m)$, $w_n=w(\R^n)$. Note that $a+b = w_m+w_n$ and $a-b = w(D)-w(C)$. Equations~\eqref{eq:gordon} and the union bound give for $0\leq\lambda<\sdimw(D)-\sdimw(C)$,
\begin{equation*}
 \Prob\bigg\{\frac{\|\vct G\|}{\sres{G}{C}{D}}\geq \frac{w_m+w_n+\lambda}{\sdimw(D)-\sdimw(C)-\lambda} \bigg\} \leq 2e^{-\frac{\lambda^2}{2}}.
\end{equation*}
We introduce $t=\frac{w_m+w_n+\lambda}{\sdimw(D)-\sdimw(C)-\lambda}$, for which $\frac{t-1}{t+1}=\frac{b+\lambda}{a}$, so that we can rewrite the bound as
\begin{equation*}
 \Prob\bigg\{\frac{\|\vct G\|}{\sres{G}{C}{D}}\geq t\bigg\}\leq 2\exp\Big(-\frac{1}{2}\Big( a\,  \frac{t-1}{t+1} - b \Big)^2 \Big) .
\end{equation*}
This holds for $\frac{t-1}{t+1}\geq\frac{b}{a}$, or equivalently, $t\geq\frac{a+b}{a-b}=\frac{w_m+w_n}{w(D)-w(C)}$. In particular, for $t_\veps:=2\frac{a+b}{a-b}+1$, that is, $\frac{t_\veps-1}{t_\veps+1}=\frac{a+b}{2a}$, we have
\begin{equation*}
 \Prob\bigg\{\frac{\|\vct G\|}{\sres{G}{C}{D}}\geq t_\veps\bigg\}\leq 2\exp\Big(-\frac{(a-b)^2}{8}\Big) = \veps .
\end{equation*}
Defining the exceptional set $E:=\{\vct G\mid \frac{\|\vct G\|}{\sres{G}{C}{D}}\geq t_\veps\}$, we have, in particular, that $\sres{G}{C}{D}>0$ for $\vct{G}\not\in E$. By~\eqref{eq:def-RCD} and~\eqref{eq:sres-dist} this implies that the condition number of $\vct G\not\in E$ is given by $\RCD{G}{C}{D} = \frac{\|\vct G\|}{\sres{G}{C}{D}}$. For the conditional expectation we thus obtain
\begin{align*}
   \E[\RCD{G}{C}{D}\mid \vct G\not\in E] & \leq \frac{1}{1-\veps}\, \int_0^{t_\veps} \Prob\Big\{\RCD{G}{C}{D}\geq t\text{ and } \vct G\not\in E\Big\}\,dt
\\ & \leq \frac{1}{1-\veps}\, \bigg( \frac{a+b}{a-b} + 2 \int_{\frac{a+b}{a-b}}^{t_\veps} \exp\Big(-\frac{1}{2}\Big( a\,  \frac{t-1}{t+1} - b \Big)^2 \Big) \,dt\bigg).
\end{align*}
Substituting $s=\frac{t-1}{t+1}$, the expression reads as
\begin{equation*}
 \E[\RCD{G}{C}{D}\mid \vct G\not\in E] \leq \frac{1}{1-\veps}\, \bigg(\frac{a+b}{a-b} + 4 \int_{\frac{b}{a}}^{\frac{a+b}{2a}} \frac{e^{-\frac{(as-b)^2}{2}}}{(1-s)^2}\,ds\bigg) ,
\end{equation*}
which was to be shown.
\end{proof}

For the proof of Theorem~\ref{thm:ren-cond-no.} it remains to estimate the integral in~\eqref{eq:keybound}. Since the boundaries of integration are both in the interval $(0,1)$ we could get an easy bound by computing the maximum of the integrand and bounding the integral accordingly. Instead, we will use Laplace's method~\cite{bender1999advanced} to show that the integral goes to zero in the assumed asymptotic setting.

\begin{proof}[Proof of Theorem~\ref{thm:ren-cond-no.}]
Recall that $C_k\subseteq \R^{m_k}$, $D_k\subseteq \R^{n_k}$ are families of cones such that the dimensions satisfy $\lim_{k\to \infty} \sdim(C_k)/m_k = \alpha^2$, $\lim_{k\to \infty} \sdim(D_k)/n_k = \beta^2$, and $\lim_{k\to \infty} m_k/n_k = \gamma^2$, with $0<\alpha,\beta,\gamma\leq1$ and $\beta > \alpha\,\gamma$. To see that Proposition~\ref{prop:keybound}, applied to these sequences of cones, does indeed give the claimed asymptotics in Theorem~\ref{thm:ren-cond-no.}, note first that the size of the exceptional set is bounded by $e^{-\Omega(n_k)}$, as
\begin{align*}
   \frac{(\sdimw(D)-\sdimw(C))^2}{n_k} & = \frac{\sdimw(D)^2}{n_k} - 2\frac{\sdimw(D)\sdimw(C)}{n_k} + \frac{\sdimw(C)^2}{n_k}
\\ & \to \beta^2-2\alpha\beta\gamma+\alpha\gamma = \beta(\beta-\alpha\gamma) + \alpha\gamma(1-\beta) > 0 ,
\end{align*}
where we used $\sdimw(C)^2\leq\sdim(C)\leq\sdimw(C)^2+1$ and the assumption $n_k\to\infty$ for $k\to\infty$. So the size of the exceptional set is small enough.

Hence, all that remains to show is that the integral in the bound~\eqref{eq:keybound} goes to zero for $k\to\infty$. More precisely, if we define as in Proposition~\ref{prop:keybound},
\begin{align*}
   a_k & = \tfrac{\sdimw(\R^{m_k})+\sdimw(\R^{n_k})}{2}+\tfrac{\sdimw(D_k)-\sdimw(C_k)}{2} ,
 & b_k & = \tfrac{\sdimw(\R^{m_k})+\sdimw(\R^{n_k})}{2}-\tfrac{\sdimw(D_k)-\sdimw(C_k)}{2} ,
\end{align*}
then the decisive term from~\eqref{eq:keybound} is given by
  \[ R_k := \int_{\frac{b_k}{a_k}}^{\frac{a_k+b_k}{2a_k}} g(s) \exp\big(n_k f_k(s)\big)\,ds , \]
where $g(s)=(1-s)^{-2}$ and $f_k(s)=-\frac{(a_ks-b_k)^2}{2n_k}$, which we need to show that it goes to zero.

The lower integral bound is not a problem so that we use the trivial lower bound $b_k/a_k\geq0$. For the upper integral bound we have the asymptotics
\begin{equation*}
  \frac{a_k+b_k}{2a_k} = \frac{\sdimw(\R^{m_k})+\sdimw(\R^{n_k})}{\sdimw(\R^{m_k})+\sdimw(\R^{n_k})+\sdimw(D_k)-\sdimw(C_k)} \to 
  u := \frac{1}{1+\frac{\beta+\alpha\gamma}{1+\gamma}} <1 ,
\end{equation*}
and for the functions $f_k$ we notice that
\begin{align*}
   f_k(s) & = -\frac{\Big(\big(\frac{\sdimw(\R^{m_k})}{n_k}+\frac{\sdimw(\R^{n_k})}{n_k}\big)(s-1)+\big(\frac{\sdimw(D_k)}{n_k}-\frac{\sdimw(C_k)}{n_k}\big)(s+1)\Big)^2}{8}
\\ & \to f(s) := -\frac{((\gamma+1)(s-1)+(\beta-\alpha\gamma)(s+1))^2}{8} .
\end{align*}
So if we define
  \[ R := \int_0^u g(s)\,e^{nf(s)}\,ds , \]
we see that all that remains is to show that $R\to0$ for $n\to\infty$. Now, $f(s)$ can be written in the form
  \[ f(s) = -\frac{(cs-d)^2}{8} ,\qquad c=\gamma+1+\beta-\alpha\gamma ,\quad d=\gamma+1-(\beta-\alpha\gamma) , \]
and we can estimate $R\leq g(u)\, \int_0^u e^{-n(cs-d)^2/8}\,ds$. Since
\begin{align*}
   0 < \frac{d}{c} & = \frac{\gamma+1-(\beta-\alpha\gamma)}{\gamma+1+\beta-\alpha\gamma} < \frac{\gamma+1}{\gamma+1+\beta-\alpha\gamma} = u ,
\end{align*}
Laplace's method yields $\int_0^u e^{-n(cs-d)^2/8}\,ds=O(n^{-1/2})$. This shows $R\to0$ for $n\to\infty$ and thus finishes the proof.
\end{proof}

\bibliographystyle{alpha}
\bibliography{wacco}

\begin{thebibliography}{10}

\bibitem{AB:15}
D.~Amelunxen and P.~B{\"u}rgisser.
\newblock Probabilistic analysis of the {G}rassmann condition number.
\newblock {\em Found. Comput. Math.}, 15(1):3--51, 2015.

\bibitem{AL:14}
D.~Amelunxen and M.~Lotz.
\newblock Gordon's inequality and condition numbers in conic optimization.
\newblock \href{http://arxiv.org/abs/1408.3016}{arXiv:1408.3016 [math.PR]}.

\bibitem{edge}
D.~Amelunxen, M.~Lotz, M.~B. McCoy, and J.~A. Tropp.
\newblock Living on the edge: phase transitions in convex programs with random
  data.
\newblock {\em Inf. Inference}, 3(3):224--294, 2014.

\bibitem{bender1999advanced}
C.~M. Bender and S.~A. Orszag.
\newblock {\em Advanced Mathematical Methods for Scientists and Engineers I:
  Asymptotic Methods and Perturbation Theory}.
\newblock Advanced Mathematical Methods for Scientists and Engineers. Springer,
  1999.

\bibitem{bianchi2010asymptotic}
P.~Bianchi, M.~Debbah, and J.~Najim.
\newblock Asymptotic independence in the spectrum of the {G}aussian unitary
  ensemble.
\newblock {\em Electron. Commun. Probab}, 15:376--395, 2010.

\bibitem{bornemann2010asymptotic}
F.~Bornemann.
\newblock Asymptotic independence of the extreme eigenvalues of {G}aussian
  unitary ensemble.
\newblock {\em Journal of Mathematical Physics}, 51(2):023514, 2010.

\bibitem{buergICM}
P.~B{\"u}rgisser.
\newblock Smoothed analysis of condition numbers.
\newblock In {\em Proceedings of the {I}nternational {C}ongress of
  {M}athematicians, Hyderabad}, volume~IV, pages 2609--2633. World Scientific,
  2010.

\bibitem{BC:10}
P.~B\"{u}rgisser and F.~Cucker.
\newblock Smoothed analysis of {M}oore--{P}enrose {I}nversion.
\newblock {\em SIAM Journal on Matrix Analysis and Applications},
  31(5):2769--2783, 2010.

\bibitem{Condition}
P.~B\"urgisser and F.~Cucker.
\newblock {\em Condition: The geometry of numerical algorithms}.
\newblock Number 349 in Grundlehren der {M}athematischen {W}issenschaften.
  Springer Verlag, 2013.

\bibitem{BCL:08}
P.~B{\"u}rgisser, F.~Cucker, and M.~Lotz.
\newblock The probability that a slightly perturbed numerical analysis problem
  is difficult.
\newblock {\em Math. Comp.}, 77(263):1559--1583, 2008.

\bibitem{CRPW:12}
V.~Chandrasekaran, B.~Recht, P.~A. Parrilo, and A.~S. Willsky.
\newblock The convex geometry of linear inverse problems.
\newblock {\em Found. Comput. Math.}, 12(6):805--849, 2012.

\bibitem{CD:05}
Z.~Chen and J.~J. Dongarra.
\newblock Condition numbers of {G}aussian random matrices.
\newblock {\em SIAM J. Matrix Anal. Appl.}, 27(3):603--620 (electronic), 2005.

\bibitem{coja2014asymptotic}
A.~Coja-Oghlan.
\newblock The asymptotic k-sat threshold.
\newblock In {\em Proceedings of the 46th Annual ACM Symposium on Theory of
  Computing}, pages 804--813. ACM, 2014.

\bibitem{coja2013chasing}
A.~Coja-Oghlan and D.~Vilenchik.
\newblock Chasing the k-colorability threshold.
\newblock In {\em Foundations of Computer Science (FOCS), 2013 IEEE 54th Annual
  Symposium on}, pages 380--389. IEEE, 2013.

\bibitem{coja2012condensation}
A.~Coja-Oghlan and L.~Zdeborov{\'a}.
\newblock The condensation transition in random hypergraph 2-coloring.
\newblock In {\em Proceedings of the twenty-third annual ACM-SIAM symposium on
  Discrete Algorithms}, pages 241--250. SIAM, 2012.

\bibitem{dahlsemidefinite}
J.~Dahl.
\newblock Semidefinite optimization using {MOSEK}.
\newblock ISMP, Berlin, 2012.

\bibitem{davidson2001local}
K.~R. Davidson and S.~J. Szarek.
\newblock Local operator theory, random matrices and {B}anach spaces.
\newblock {\em Handbook of the geometry of Banach spaces}, 1:317--366, 2001.

\bibitem{DM:08}
D.~S. Dean and S.~N. Majumdar.
\newblock Extreme value statistics of eigenvalues of {G}aussian random
  matrices.
\newblock {\em Phys. Rev. E (3)}, 77(4):041108, 12, 2008.

\bibitem{DMW:15}
V.~Diekert, A.~G. Myasnikov, and A.~Wei\ss.
\newblock Amenability of {S}chreier graphs and strongly generic algorithms for
  the conjugacy problem.
\newblock \href{http://arxiv.org/abs/1501.05579}{arXiv:1501.05579 [math.GR]}.

\bibitem{ELC:14}
A.~Edelman and M.~La~Croix.
\newblock The singular values of the {GUE} (less is more).
\newblock \href{http://arxiv.org/abs/1410.7065}{arXiv:1410.7065 [math.PR]}.

\bibitem{edelman2005random}
A.~Edelman and N.~R. Rao.
\newblock Random matrix theory.
\newblock {\em Acta Numerica}, 14:233--297, 2005.

\bibitem{erdHos2010wegner}
L.~Erd{\H{o}}s, B.~Schlein, and H.-T. Yau.
\newblock Wegner estimate and level repulsion for {W}igner random matrices.
\newblock {\em International Mathematics Research Notices}, 2010(3):436--479,
  2010.

\bibitem{FR:13}
S.~Foucart and H.~Rauhut.
\newblock {\em A mathematical introduction to compressive sensing}, volume 336
  of {\em Applied and Numerical Harmonic Analysis}.
\newblock Birkh\"auser, Basel, 2013.

\bibitem{freund2007behavioral}
R.~M. Freund, F.~Ord{\'o}{\~n}ez, and K.-C. Toh.
\newblock Behavioral measures and their correlation with {IPM} iteration counts
  on semi-definite programming problems.
\newblock {\em Mathematical programming}, 109(2-3):445--475, 2007.

\bibitem{GoLoan}
G.~Golub and C.~Van~Loan.
\newblock {\em Matrix computations}.
\newblock Johns Hopkins Studies in the Mathematical Sciences. Johns Hopkins
  University Press, Baltimore, MD, third edition, 1996.

\bibitem{Gordon}
Y.~Gordon.
\newblock On {M}ilman's inequality and random subspaces which escape through a
  mesh in {${\bf R}^n$}.
\newblock In {\em Geometric aspects of functional analysis (1986/87)}, volume
  1317 of {\em Lecture Notes in Math.}, pages 84--106. Springer, Berlin, 1988.

\bibitem{johnstone2012fast}
I.~M. Johnstone and Z.~Ma.
\newblock Fast approach to the {T}racy-{W}idom law at the edge of {GOE} and
  {GUE}.
\newblock {\em The Annals of Applied Arobability: an official journal of the
  Institute of Mathematical Statistics}, 22(5):1962, 2012.

\bibitem{K:88}
E.~Kostlan.
\newblock Complexity theory of numerical linear algebra.
\newblock {\em J. Comput. Appl. Math.}, 22(2-3):219--230, 1988.
\newblock Special issue on emerging paradigms in applied mathematical
  modelling.

\bibitem{K:91}
E.~Kostlan.
\newblock Statistical complexity of dominant eigenvector calculation.
\newblock {\em J. Complexity}, 7(4):371--379, 1991.

\bibitem{ledoux2007deviation}
M.~Ledoux.
\newblock Deviation inequalities on largest eigenvalues.
\newblock In {\em Geometric aspects of functional analysis}, pages 167--219.
  Springer, 2007.

\bibitem{NTV:15}
H.~Nguyen, T.~Tao, and V.~Vu.
\newblock Random matrices: tail bounds for gaps between eigenvalues.
\newblock \href{http://arxiv.org/abs/1504.00396}{arXiv:1504.00396 [math.PR]}.

\bibitem{rene:94}
J.~Renegar.
\newblock Some perturbation theory for linear programming.
\newblock {\em Math. Programming}, 65(1, Ser. A):73--91, 1994.

\bibitem{rene:95a}
J.~Renegar.
\newblock Linear programming, complexity theory and elementary functional
  analysis.
\newblock {\em Math. Programming}, 70(3, Ser. A):279--351, 1995.

\bibitem{roulet2015renegar}
V.~Roulet, N.~Boumal, and A.~d'Aspremont.
\newblock Renegar's condition number and compressed sensing performance.
\newblock \href{http://arxiv.org/abs/1506.03295}{arXiv:1506.03295 [math.PR]}.

\bibitem{RV:09rect}
M.~Rudelson and R.~Vershynin.
\newblock Smallest singular value of a random rectangular matrix.
\newblock {\em Comm. Pure Appl. Math.}, 62(12):1707--1739, 2009.

\bibitem{rudelson2010non}
M.~Rudelson and R.~Vershynin.
\newblock Non-asymptotic theory of random matrices: extreme singular values.
\newblock In {\em Proceedings of the {I}nternational {C}ongress of
  {M}athematicians, Hyderabad}, volume III, pages 1576--1602. World Scientific,
  2010.

\bibitem{ST:06}
D.~A. Spielman and S.-H. Teng.
\newblock Smoothed analysis of algorithms and heuristics: progress and open
  questions.
\newblock In {\em Foundations of computational mathematics, {S}antander 2005},
  volume 331 of {\em London Math. Soc. Lecture Note Ser.}, pages 274--342.
  Cambridge Univ. Press, Cambridge, 2006.

\bibitem{venkataramanan2014gaussian}
R.~Venkataramanan and S.~Tatikonda.
\newblock The {G}aussian rate-distortion function of sparse regression codes
  with optimal encoding.
\newblock In {\em Information Theory (ISIT), 2014 IEEE International Symposium
  on}, pages 2844--2848. IEEE, 2014.

\bibitem{VRP:07}
J.~C. Vera, J.~C. Rivera, J.~Pe{\~n}a, and Y.~Hui.
\newblock A primal-dual symmetric relaxation for homogeneous conic systems.
\newblock {\em J. Complexity}, 23(2):245--261, 2007.

\bibitem{Vershynin2012}
R.~Vershynin.
\newblock Introduction to the non-asymptotic analysis of random matrices.
\newblock In Y.~C. Eldar and G.~Kutyniok, editors, {\em Compressed sensing},
  pages xii+544. Cambridge University Press, Cambridge, 2012.
\newblock Theory and applications.

\bibitem{von1963john}
J.~von Neumann and A.~H. Taub.
\newblock {\em John von Neumann Collected Works: Volume V - Design of
  Computers, Theory of Automata and Numerical Analysis}.

\bibitem{wilkinson1965algebraic}
J.~H. Wilkinson.
\newblock {\em The algebraic eigenvalue problem}.
\newblock Clarendon Press, Oxford, 1965.

\bibitem{Wsch:04}
M.~Wschebor.
\newblock Smoothed analysis of $\kappa(a)$.
\newblock {\em J. of Complexity}, 20:97--107, 2004.

\end{thebibliography}

\end{document}